\documentclass[11pt,reqno]{amsart}

\usepackage[T2A]{fontenc}
\usepackage[russian,english]{babel}
\usepackage[cp1251]{inputenc}
\usepackage{izvuz7}
\usepackage{amssymb,amsmath,amsthm,amsfonts}
\usepackage{graphicx}

\theoremstyle{plain}
\newtheorem{theorem}{Theorem}

\theoremstyle{definition}
\newtheorem{definition}{Definition}
\newtheorem{corollary}{Corollary}
\newtheorem{remark}{Remark}
\newtheorem{example}{Example}

\tit{\MakeUppercase{Consistent rational approximations of power series,\\ trigonometric series and series of Chebyshev polynomials}}
\shorttit{\MakeUppercase{Consistent rational approximations}}

\author{\MakeUppercase{A.P.\,Starovoitov, I.V.\,Kruglikov, T.M.\,Osnach}}
 
\god{202\_}
\nomer{\textnumero\,\_}

\setcounter{page}{1}
\pp{\pageref{firstpage}--\pageref{lastpage}}

\begin{document}
\selectlanguage{english}
\maketit
\label{firstpage}

\vskip0.3truecm
\hbox to \hsize{\hss {\vbox{
			\noindent \small {\sl \small Abstract. 
				For trigonometric series and series of Chebyshev polynomials, we defined trigonometric Hermite–Pad\'e and Hermite–Jacobi approximations, linear and nonlinear Hermite--Chebyshev approximations. We established criterion of the existence and uniqueness of trigonometric Hermite–Pad\'e polyno\-mials, associated with an arbitrary set of $k$ trigonometric series, and we found explicit form of these polynomials. Similar results were obtained for linear Hermite--Chebyshev approximations. We made examples of systems of functions for which trigonometrical Hermite–Jacobi approximations are existed but aren’t the same as trigonometric Hermite--Pad\'e approximations. Similar examples were made for linear and nonlinear Hermite--Chebyshev approximations.}
	}}\hss} 

\vskip0.2truecm
\hbox to \hsize{\hss {\vbox{
			\noindent \small {\sl \small Keywords}: Hermite–Pad\'e approximations, Pad\'e–Chebyshev approximations, trigonometric series, series of Chebyshev polynomials.
			\selectlanguage{english}
	}}\hss}

\section*{Introduction}
The theory of Pad\'e approximations is one of the intensively developing sections of complex analysis. The construction of such approximations is in a sense universal: their analogues can be constructed, in particular, for trigonometric series, series of Chebyshev polynomials and Faber polynomials. In recent years, generalizations of this kind have found numerous applications in various problems of analysis, applied mathematics, theoretical physics, mechanics, geophysics
(\cite{Andr1}--\cite{Gon1}). In this regard, for example, in \cite{Suet2} notes that the Pad\'e-Chebyshev approximations, along with the classical Pad\'e approximations have actually become an integral part of scientific and technical calculations, which is reflected in the creation of special programs for their location in well-known computer systems (\cite{Tee, Prox}).

We owe the appearance of two different approaches to determining Pad\'e approximations to G.\,Frobenius~\cite{Frob} and C.\,Jacobi~\cite{Jacobi}.  
These two approaches, which generally lead to different approximations, are retained when constructing their analogues for trigonometric series and series in Chebyshev polynomials. Moreover, if for power series the differences in questions of existence and uniqueness between two types of such approximations are well known (\cite{Bek}), then for the generalizations under consideration only the first steps have been made in this direction (for example,   \cite{Suet2, Bek, SuetCh}).

Similar design to C.\,Jacobi~\cite{Jacobi}, but designed for simultaneous interpolation 
several functions, was developed by Ch.\,Hermite~\cite{Hermite} and formed the basis of his proof of the transcendence of number $e$. In the work~\cite{Hermite} rational fractions appeared for the first time, which are now commonly called  {\it Hermite--Pad\'e approximations}. It should be said that Hermite--Pad\'e approximations for a system of functions can also be of two types. In what follows we will call them {\it Hermite--Pad\'e approximations} ({\it Hermite--Frobenius--Pad\'e approximations}) and {\it Hermite--Jacobi approximations}. Initially, Ch.\,Hermite considered the simultaneous interpolation of several exponential functions. In this case, both constructions of rational fractions coincide. In the case of an arbitrary system of functions this is not the case. We also note that at present, Hermite--Pad\'e approximations have been studied quite well (for example, ~\cite{Bek, Mahler,  NikSor}) and have found numerous applications in various areas of algebra, analysis and modern physics (for example, \cite{Beckermann}--\cite{Assche}). While the main properties of their generalizations, for example, for trigonometric series and series of Chebyshev polynomials, as far as we know, remain unexplored.

The present work is devoted to the construction of analogues of the Hermite--Pad\'e and Hermite--Jacobi approximations for trigonometric series and series of Chebyshev polynomials. From a formal point of view, the definition of new constructions of rational fractions for such series does not present any particular difficulties. At the same time, obtaining any meaningful results on the existence and uniqueness of new approximations is not as simple as with rational approximations of power series (\cite{Gon1, Suet2, Bek, Geddes, Litvinov, Adukov}).

\section{Hermite--Pad\'e and Hermite--Jacobi approximations} 
\subsection{Hermite--Pad\'e approximations}\label{s2.1}
Let us present some well-known facts of the theory of Hermite--Pad\'e approximations, which we will need later.

For an arbitrary function $f(z)$, represented by a power series
\begin{equation}
\label{eq2.1}
f(z)=\sum_{l=0}^{\infty}f_lz^{l},                           
\end{equation}
and for every pair $(n,m)$ of non-negative integers there exist algebraic polynomials $Q_m(z)$, $P_n(z)$,   $\deg Q_m \leqslant{m}$, $\deg P_n \leqslant{n}$,  
for which
\begin{equation}
\label{eq2.2}
Q_m(z)f{(z)}-P_n(z)=O(z^{n+m+1})\,\,.
\end{equation}
Here and below, by $O(z^p)$ we mean a power series of the form $c_1z^p+c_2z^{p+1}+\ldots$\,\,\,.

Polynomials $Q_m(z)$ and $P_n(z)$ are not uniquely determined by condition~\eqref{eq2.2}, but fractions
$$
\pi_{n,\,m}(z)=\pi_{n,\,m}(z;f)=\frac{P_n(z)}{Q_m(z)}
$$
define the same rational function no matter what polynomials $Q_m(z)$, $P_n(z)$,  satisfying~\eqref{eq2.2}, we take \cite[chapter~2, \S 1]{NikSor}. Rational functions $\pi_{n,\,m}(z)$ are usually called  {\it Pad\'e or Frobenius--Pad\'e approximations} (as a definition of a rational fraction $\pi_{n,\,m}(z;f)$ the relations~\eqref{eq2.2} were first proposed in 1881 by G.\,Frobenius \cite{Frob}, the authorship of Pad\'e is based on his dissertation \cite{PadD1892} of 1892).

Even earlier (in 1846), a problem similar in formulation was studied by С.\,Jacobi~\cite{Jacobi}, who generalized O.\,Cauchy's result on rational interpolation of a function defined at $n+m+1$ different points. С.\,Jacobi considered $(n+m+1)$-fold rational interpolation at one point. His interpolation construction leads to the following definition.

\begin{definition}\label{de1}
	A rational fraction
	$$
	\widehat{\pi}_{n,\,m}(z)=\widehat {\pi}_{n,\,m}(z;f)=\frac{\widehat{P}_n(z)}{\widehat{Q}_m(z)}\,,
	$$
	in which the algebraic polynomials $\widehat{Q}_{m}(z),\widehat{P}_{n}(z)$ have degrees not higher than $m$ and $n$, respectively, will be called  {\it Pad\'e--Jacobi approximation} for a pair $(n,m)$ and a function $f(z)$, if 
	$$
	f(z)-\frac{\widehat{P}_n(z)}{\widehat{Q}_m(z)}=O(z^{n+m+1}).
	$$
\end{definition}

Unlike Pad\'e approximations $\pi_{n,\,m}(z;f)$, which exist for any pair of indices $(n,m)$, Pad\'e--Jacobi approximations $\widehat {\pi}_{n,\,m}(z;f)$ may not exist. A corresponding example of a function $f(z)$ and a pair of indices $(n,m)$ is given in \cite[chapter 1, \S\,1.4]{Bek}. The first significant result in the study of the conditions under which $\widehat {\pi}_{n,\,m}(z;f)$ exists, was obtained by C.\,Jacobi~\cite{Jacobi}. To formulate it, we introduce Hadamard determinants 
\begin{equation}
\label{eq2.3}
H_{n,m}= \left| \!\!\! \begin{array}{cccc}
f_{n-m+1} & f_{n-m+2} & \ldots & f_{n} \\
f_{n-m+2} & f_{n-m+3} & \ldots & f_{n+1} \\
\ldots & \ldots & \ldots & \ldots \\
f_{n} & f_{n+1} & \ldots & f_{n+m-1} \\
\end{array} \!\!\! \right|\,,
\end{equation}
the elements of which are the coefficients of series~\eqref{eq2.1}. Here for $p<0$ we assume that $f_p=0$. 

\begin{theorem}[Jacobi~\cite{Jacobi}]\label{t1}
	If for a pair $(m,n)$ the determinant $H_{n,m}\neq0$, then the Pad\'e--Jacobi approximations $\widehat {\pi}_{n,\,m}(z;f)$ exist and coincide with the Frobenius--Pad\'e approximations of the function $f(z)$, i.e.  $$\widehat {\pi}_{n,\,m}(z;f)={\pi}_{n,\,m}(z;f).$$
\end{theorem}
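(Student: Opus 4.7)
The plan is to translate the defining relation~\eqref{eq2.2} into a linear system for the coefficients of $Q_m$ and $P_n$ and read off from it how the hypothesis $H_{n,m}\neq 0$ enters. Writing $Q_m(z)=\sum_{j=0}^{m}q_jz^{j}$ and $P_n(z)=\sum_{k=0}^{n}p_kz^{k}$ and comparing coefficients of $z^k$ in \eqref{eq2.2}, I obtain
\begin{equation*}
\sum_{j=0}^{m}q_j f_{k-j}-p_k=0,\qquad k=0,1,\ldots,n+m,
\end{equation*}
under the conventions $p_k=0$ for $k>n$ and $f_p=0$ for $p<0$. The equations with $k\leqslant n$ simply express each $p_k$ as a linear combination of $q_0,\ldots,q_m$, so the content is the homogeneous subsystem consisting of the $m$ equations corresponding to $k=n+1,\ldots,n+m$ in the $m+1$ unknowns $q_0,\ldots,q_m$.

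The key step is to exploit $H_{n,m}\neq 0$ to normalize $q_0=1$. After doing so, the remaining unknowns $q_1,\ldots,q_m$ are determined by an $m\times m$ linear system whose coefficient matrix has $(i,j)$-entry $f_{n+i-j}$; reversing the order of its columns turns it into the Hadamard matrix appearing in~\eqref{eq2.3}, so its determinant equals $\pm H_{n,m}\neq 0$ and the system is uniquely solvable. The resulting pair $(Q_m,P_n)$ satisfies~\eqref{eq2.2} with $Q_m(0)=1\neq 0$, hence the fraction $P_n/Q_m$ is regular at the origin, and dividing~\eqref{eq2.2} by $Q_m(z)$ yields
\begin{equation*}
f(z)-\frac{P_n(z)}{Q_m(z)}=\frac{O(z^{n+m+1})}{Q_m(z)}=O(z^{n+m+1}),
\end{equation*}
which is precisely the condition in Definition~\ref{de1}. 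Since every such pair $(Q_m,P_n)$ also fulfils~\eqref{eq2.2}, and the latter defines the same rational function $\pi_{n,m}(z;f)$ regardless of which representatives one chooses, the Pad\'e--Jacobi approximation we have produced coincides with the Frobenius--Pad\'e approximation.

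The only step that is more than routine linear algebra is the identification of the coefficient matrix of the normalized subsystem with a column-reversed copy of the Hadamard matrix in~\eqref{eq2.3}; once the bookkeeping of indices is laid out correctly this is essentially automatic, but it is where the specific form of $H_{n,m}$ chosen in the statement becomes essential. Everything else --- rewriting~\eqref{eq2.2} as a linear system, splitting it into an inhomogeneous piece that defines $P_n$ and a homogeneous piece for $Q_m$, and passing from $Q_m(0)\neq 0$ to the Pad\'e--Jacobi property via division --- is a mechanical consequence of the setup.
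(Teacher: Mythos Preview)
Your argument is correct and follows essentially the same route that the paper indicates (cf.\ the sketch given after Theorem~\ref{t2}): one shows that $H_{n,m}\neq 0$ forces $Q_m(0)\neq 0$ for a suitably normalized solution of~\eqref{eq2.2}, and then divides by $Q_m$ to pass from the linear condition~\eqref{eq2.2} to the nonlinear condition of Definition~\ref{de1}. The only cosmetic difference is the choice of normalization: the paper takes the determinant representation of $Q_m$ so that $b_0=H_{n,m}$ directly, whereas you fix $q_0=1$ and invoke $H_{n,m}\neq 0$ to solve the remaining $m\times m$ system; these are obviously equivalent.
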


A complete study of the conditions under which $\widehat {\pi}_{n,\,m}(z;f)$ exist was conducted by G.\,Baker~\cite[chapter 1, \S\,1.4]{Bek}. In this connection, rational functions $\widehat {\pi}_{n,\,m}(z;f)$ are also called Pad\'e approximations in the sense of Baker.

\vspace{0.1 cm}
Let us now consider system ${\bf f}=(f_1,\ldots,f_k)$, consisting of
$k$ functions, representable by power series
\begin{equation*}
\label{eq1.4}
f_j(z)=\sum_{l=0}^{\infty}{f^j_l}{z^{l}},\,\,\,\,\,
j=1,2,\ldots,k\,
\end{equation*}
with complex coefficients. 
The set of $k$--dimensional
multi-indices, which are an ordered set of $k$ non-negative integers, is denoted by $\mathbb{Z}^k_+$. The order of the multi-index $\overrightarrow{m}=(m_1,\ldots,m_k)\in \mathbb{Z}^k_+$
is the sum $m=m_1+\ldots+m_k$.

 Let us fix an index $n\in
\mathbb{Z}^1_+$ and a multi-index ${\overrightarrow{m}=(m_1,
	\ldots, m_k)\in\mathbb{Z}^k_+ }$ and consider the problem first posed and solved by Ch.\,Hermite for a set of exponents $e^{x},\,e^{2x},\ldots,e^{kx}$, and for an arbitrary system of analytic functions, formulated in its final form by K.\,Mahler~\cite{Mahler}. Let's use her formulation in the monograph~\cite[chapter 4, \S 1, problem А]{NikSor}:

\vspace{0,2cm} {\bf Problem А}. {\it Find a polynomial
	$Q_m(z)=Q_{n,\overrightarrow{m}}(z;{\bf f})$ that is not identically equal to zero and whose degree $\deg Q_m \leqslant m$, and polynomials $P^j_{n_j}(z)=P^j_{n,\overrightarrow m}(z;{\bf f})$, 
	$\deg P^j_{n_j}\leqslant n_j$, $n_j=n+m-m_j$, so that for}
$j=1,2,\ldots,k$
\begin{equation}
\label{eq2.5}
R^j_{n,\overrightarrow{m}}(z):=
Q_m(z)f_j(z)-P_{n_j}^j(z)=O(z^{n+m+1}).
\end{equation}

\begin{definition}\label{de2}
	Polynomials $Q_m(z), P^1_{n_1}(z),\ldots, P^k_{n_k}(z)$, that are a solution to problem~{\bf A} (a solution to problem~{\bf A} always exists~\cite{NikSor}), 
	and rational fractions
	$$
	\pi_j(z;{\bf f})=	\pi^j_{n_j,n,\overrightarrow{m}}(z;{\bf f})=\frac{P^j_{n_j}(z)}{Q_m(z)},\,\,\,j=1,2,\ldots,k\,
	$$
	are called, respectively, {\it Hermite--Pad\'e polynomials} and   {\it   Hermite--Pad\'e approximations} for a multi-index $(n,\overrightarrow{m})$ and a system   ${\bf f}$.
\end{definition}

Of particular interest are systems of functions $\bf f$, for which rational fractions $\{\pi_j(z;{\bf f})\}_{j=1}^k$ are uniquely determined by conditions~\eqref{eq2.5} for any multi-index $(n,\overrightarrow{m})$.  An important example of such systems are perfect systems~\cite[chapter 4, \S 1]{NikSor}. A system of exponents ${\bf E_k}=\left\{ e^{\lambda_jz}\right\}^k_{j=1}$ is perfect, where $\lambda_j$ are different non-zero complex numbers. Without a formal definition, this statement was proven by Ch.\,Hermite~\cite{Hermite} (\cite[chapter 4, \S 2]{NikSor}). Examples of systems other than perfect ones, for which $\{\pi_j(z;{\bf f})\}_{j=1}^k$  are determined uniquely, are given in \cite{StarRjab1, StarRjab2}. In these works, in particular, necessary and sufficient conditions were found under which problem A has a unique (up to a numerical factor) solution.

\subsection{Hermite--Jacobi approximations}\label{s2.2}
Let us introduce into consideration multiple analogues of C.\,Jacobi fractions~\cite{Jacobi}.

\begin{definition}\label{de4}
	Rational functions of the form
	$$
	\widehat{\pi}_j(z;{\bf f})=	\widehat{\pi}_{n_j,n,\overrightarrow{m}}(z;{\bf f})=\frac{\widehat{P}^j_{n_j}(z)}{\widehat{Q}_m(z)},\,\,\,j=1,2,\ldots,k\,,
	$$
	where algebraic polynomials $\widehat{Q}_m(z)=\widehat{Q}_{n,\overrightarrow{m}}(z;{\bf f})$,
	$\widehat{P}^j_{n_j}(z)=\widehat{P}_{n,\overrightarrow{m}}^j(z;{\bf f})$ have degrees not higher than $m$ and $n_j$ respectively, $n_j=n+m-m_j$, will be called {\it  Hermite--Jacobi approximations} for a multi-index $(n,\overrightarrow{m})$ and a system of functions ${\bf f}$, if 
	\begin{equation}
	\label{eq2.9}
	f_j(z)-\frac{\widehat{P}^j_{n_j}(z)}{\widehat{Q}_m(z)}=O(z^{n+m+1}).
	\end{equation}
\end{definition}

Polynomials $\widehat{Q}_m(z), \widehat{P}^1_{n_1}(z),\ldots,\widehat{P}^k_{n_k}(z)$, satisfying conditions~\eqref{eq2.9} (we will call them {\it  Hermite--Jacobi polynomials}), as well as Hermite--Jacobi approximations, may not exist. Let us give a corresponding example.

\begin{example}
	\label{ex1}
Let $k=2$, $n=1$, $m_1=m_2=1$,
$$
f_1(z)=2+z+2z^2+z^3+2z^4+z^5+\ldots,
$$
$$
f_2(z)=1+z+2z^2+3z^3+4z^4+5z^5+\ldots\,.
$$
Then $m=2$, $n_1=n_2=2$, and polynomials $\widehat{Q}_2(z), \widehat{P}^1_{2}(z),\widehat{P}^2_{2}(z)$, satisfying conditions~\eqref{eq2.9}, with a suitable choice of the normalizing factor, must necessarily have the form:
$$
\widehat{Q}_{2}(z)=z-2z^2,\,\, \widehat{P}^1_{2}(z)=2z-3z^2,\,\,\widehat{P}^2_{2}(z)=z-z^2.
$$
It is easy to check that, in fact, for the found polynomials, conditions~\eqref{eq2.9} are not satisfied, i.e.
$$
f_1(z)-\dfrac{2z-3z^2}{z-2z^2}\neq O(z^4),\,\,\,f_2(z)-\dfrac{z-z^2}{z-2z^2}\neq
O(z^4).
$$
\end{example}

\begin{remark}\label{re1}
	It follows from the definitions that if Hermite--Jacobi approximations exist for $(n,\overrightarrow{m})$ and ${\bf f}$, then they are also Hermite--Pad\'e approximations. Example 1 shows that the converse statement is, generally speaking, not true: polynomials $\widehat{Q}_2(z), \widehat{P}^1_{2}(z),\widehat{P}^2_{2}(z)$ are Hermite--Pad\'e polynomials, but are not Hermite--Jacobi polynomials for the systems of functions and the multi-index under consideration.
\end{remark}

\subsection{Generalization of Jacobi's theorem}\label{s2.3}
Let us introduce new notations. For index $n\in
\mathbb{Z}^1_+$ and non-zero multi-index ${\overrightarrow{m}=(m_1,
	\ldots, m_k) }$ consider a determinant

\begin{equation*}
H_{n,\overrightarrow m}=\det
\left(  \begin{array}{cccc}
H^1 \\
H^2 \\
\vdots  \\
H^k \\
\end{array}  \right)
=
\left|  \begin{array}{cccc}
f_{n-m_1+1}^{1} & f_{n-m_1+2}^{1} & \ldots & f_{n_1}^{1} \\
\ldots & \ldots & \ldots & \ldots \\
f_{n}^{1} & f_{n+1}^{1} & \ldots & f_{n+m-1}^{1} \\
\ldots & \ldots & \ldots & \ldots \\
f_{n-m_k+1}^k & f_{n-m_k+2}^k & \ldots & f_{n_k}^k \\
\ldots & \ldots & \ldots & \ldots \\
f_{n}^k & f_{n+1}^k & \ldots & f_{n+m-1}^k \\
\end{array}  \right|,
\end{equation*}
for $m_j\neq0$ consisting of blocks
$$
H^j=\left( \! \begin{array}{cccc}
f_{n-m_j+1}^{j} & f_{n-m_j+2}^{j} & \ldots & f_{n_j}^{j} \\
f_{n-m_j+2}^{j} & f_{n-m_j+3}^{j} & \ldots & f_{n_j+1}^{j} \\
\ldots & \ldots & \ldots & \ldots \\
f_{n}^j & f_{n+1}^j & \ldots & f_{n+m-1}^j \\
\end{array} \! \right),
$$
located one above the other. For $p<0$ here we also assume $f^j_p=0$. By definition, we assume that for $m_j=0$, determinant $H_{n,\overrightarrow m}$ does not contain block $H^j$. Determinant $H_{n,\overrightarrow m}$ is a multiple analogue of the Hadamard determinant~\eqref{eq2.3}: if $k=1$ or ${\overrightarrow{m}=(m_1,0,\ldots, 0) }$, then $H_{n,\overrightarrow m}$ exactly coincides with the Hadamard determinant~\eqref{eq2.3} of function $f_1(z)$.

The following multiple analogue of Jacobi's theorem holds~\cite{OsnStarRjab}.

\begin{theorem}[\cite{OsnStarRjab}]\label{t2}
	Let the determinant $H_{n,\overrightarrow m}\neq 0$ for the multi-index $(n,\overrightarrow m)\in\mathbb{Z}^{k+1}_+ $, $\overrightarrow{m}\neq (0,\ldots,0)$ and the system of functions ${\bf f}$. Then for $(n,\overrightarrow m)$ and ${\bf f}$ the Hermite--Jacobi approximations exist, are uniquely defined, and each of them identically coincides with the corresponding Hermite--Pad\'e approximation, i.e.
	\begin{equation}
	\label{eq2.10}
	\widehat\pi_{j}(z;{\bf f})=	\pi_{j}(z;{\bf f}),\,\,\,\,\, j=1,2,\ldots,k.
	\end{equation}	
\end{theorem}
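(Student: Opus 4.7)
The plan is to translate the defining conditions of the Hermite--Pad\'e polynomials into an explicit linear system for the coefficients of $Q_m$ and read off the determinantal hypothesis directly from that system.

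Write $Q_m(z)=\sum_{i=0}^{m}q_i z^i$. For each $j$, the requirement $Q_m(z)f_j(z)-P^j_{n_j}(z)=O(z^{n+m+1})$ forces, since $\deg P^j_{n_j}\leqslant n_j=n+m-m_j$, the vanishing of the coefficients of $z^s$ in $Q_m f_j$ for $s=n_j+1,\ldots,n+m$. This yields $m_j$ homogeneous linear equations in $q_0,\ldots,q_m$ of the form $\sum_{i=0}^{m}q_i f^j_{s-i}=0$. Assembling the equations for $j=1,\ldots,k$ produces a homogeneous system $M\mathbf{q}=0$, where $M$ is an $m\times(m+1)$ matrix naturally partitioned into blocks $M^j$ (one per index with $m_j\neq 0$).

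The key step is to relate the minors of $M$ to the determinant $H_{n,\overrightarrow m}$. A direct inspection of indices shows that the $m\times m$ submatrix of $M$ obtained by deleting the column associated with $q_0$ coincides, after reversing its columns, precisely with the block structure defining $H_{n,\overrightarrow m}$. Hence the hypothesis $H_{n,\overrightarrow m}\neq 0$ is equivalent to linear independence of the $m$ columns of $M$ corresponding to $q_1,\ldots,q_m$. Two consequences follow: first, $M$ has rank exactly $m$, so its null space is one-dimensional and $Q_m$ is determined up to a nonzero scalar (with each $P^j_{n_j}$ then fixed as the truncation of $Q_m f_j$ to degree $\leqslant n_j$); second, no nontrivial solution can have $q_0=0$, since otherwise $q_1,\ldots,q_m$ would give a nontrivial linear relation among columns just shown to be independent. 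Thus $Q_m(0)\neq 0$.

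To finish, because $Q_m(0)\neq 0$, the polynomial $Q_m$ is invertible in the ring of formal power series, and the identity $Q_m(z)f_j(z)-P^j_{n_j}(z)=O(z^{n+m+1})$ may be divided by $Q_m(z)$ to yield $f_j(z)-P^j_{n_j}(z)/Q_m(z)=O(z^{n+m+1})$. Hence each $\pi_j(z;{\bf f})$ is already a Hermite--Jacobi approximation, giving existence. By Remark~\ref{re1} every Hermite--Jacobi approximation is simultaneously a Hermite--Pad\'e approximation, and the uniqueness of $Q_m$ established above transfers to the rational functions $\widehat\pi_j$, producing both uniqueness and the identification \eqref{eq2.10}. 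The principal obstacle I expect is the bookkeeping in the second paragraph: one must carefully track the row and column labels to verify that deleting the $q_0$-column and reversing column order in $M$ really reproduces $H_{n,\overrightarrow m}$ block by block. Once this combinatorial identification is secured, both the uniqueness of $Q_m$ and the crucial fact $Q_m(0)\neq 0$ fall out at once, and the remainder of the argument is automatic.
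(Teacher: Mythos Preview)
Your proposal is correct and follows essentially the same route as the paper's own argument. The paper's proof is just a two-line sketch: it cites \cite{StarRjab2} for the fact that $H_{n,\overrightarrow m}\neq 0$ forces Problem~A to have a unique solution, notes that under a suitable normalization $Q_m(0)=b_0=H_{n,\overrightarrow m}\neq 0$, and then divides by $Q_m$; your linear-algebra paragraph simply unpacks that citation, and your identification of the $q_0$-deleted minor with $H_{n,\overrightarrow m}$ (up to column reversal) is exactly what underlies the paper's claim that $b_0=H_{n,\overrightarrow m}$.
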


	In the formulation of the theorem~\ref{t2} it was assumed that the multi-index $\overrightarrow{m}$ is non-zero. If $\overrightarrow{m}=(0,\ldots,0)$, then up to a numerical factor $\widehat{Q}_m(z)\equiv Q_m(x)\equiv 1$, and polynomials
	$\widehat{P}^j_{n}(z)$, $ P^j_{n}(z)$  coincide and are the
	$n$-th partial sum of series $f_j(z)$. Therefore, in this case, equalities~\eqref{eq2.10} are also preserved.
	
	The proof of theorem~\ref{t2}
is similar to the proof of Jacobi's theorem. (\cite[chapter 1, \S 1.1]{Bek}):  if $H_{n,\overrightarrow m}\neq 0$, then problem A has a unique solution (\cite{StarRjab2}), and under some normalization  $Q_m(z)=b_0+b_1z+\ldots+b_mz^m$, where $b_0=H_{n,\overrightarrow m}\neq 0$, therefore fraction $1/Q_m(z)$ is analytic in some neighborhood of zero.

\begin{definition}\label{de5}
	We call the system ${\bf f}$ {\it quite perfect}, if for any multi-index $(n,\overrightarrow m)\in\mathbb{Z}^k_+$ and $j=1,2,\ldots,k$
	$$
	\deg Q_m=m,\,\, \deg P_{n_j}^j=n_j, \,\,\gcd(Q_m,P^j_{n_j})=1.
	$$	
\end{definition}
 In~\cite{StarRjab2} it is proved that if the system is quite perfect, then for any multi-index $(n,\overrightarrow m)$ $H_{n,\overrightarrow m}\neq 0$.
Therefore, the following corollary is true.
 
\begin{corollary}
 If system ${\bf f}$ is quite perfect, then for any multi-index $(n,\overrightarrow m)$ there exist Hermite--Jacobi approximations and equalities~\eqref{eq2.10} are valid. 
\end{corollary}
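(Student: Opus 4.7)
The plan is to derive this corollary directly by combining the two already-cited ingredients: Theorem~\ref{t2} (a multiple Jacobi theorem) and the result from~\cite{StarRjab2} that quite perfect systems have nonvanishing determinants $H_{n,\overrightarrow{m}}$. So the proof should be very short, essentially a case split on whether $\overrightarrow{m}$ is the zero multi-index.

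First, I would dispose of the trivial case $\overrightarrow{m}=(0,\ldots,0)$. Here $m=0$, so $\widehat{Q}_m$ and $Q_m$ are both nonzero constants, which may be normalized to $1$; then $n_j=n$ for all $j$ and condition~\eqref{eq2.5} forces $P^j_n(z)$ to be the $n$-th partial sum of $f_j(z)$, and condition~\eqref{eq2.9} forces $\widehat{P}^j_n(z)$ to be the same partial sum. Hence Hermite--Jacobi approximations exist and coincide with the Hermite--Pad\'e approximations, so~\eqref{eq2.10} holds. This is exactly the observation made in the paragraph immediately following Theorem~\ref{t2}.

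Second, for an arbitrary multi-index $(n,\overrightarrow{m})$ with $\overrightarrow{m}\neq(0,\ldots,0)$, I would invoke the cited result from~\cite{StarRjab2}: if $\mathbf{f}$ is quite perfect, then $H_{n,\overrightarrow{m}}\neq 0$. Under this nonvanishing hypothesis, Theorem~\ref{t2} applies verbatim and yields both the existence and uniqueness of the Hermite--Jacobi approximations $\widehat{\pi}_j(z;\mathbf{f})$ and the identities $\widehat{\pi}_j(z;\mathbf{f})=\pi_j(z;\mathbf{f})$ for $j=1,\ldots,k$, which is exactly~\eqref{eq2.10}.

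Since both cases are covered, the corollary follows. There is no genuine obstacle here: all the work has been done in Theorem~\ref{t2} and in the preparatory lemma from~\cite{StarRjab2}; the corollary is purely a packaging statement. The only thing one must be slightly careful about is to remember the degenerate case $\overrightarrow{m}=0$, where the determinant $H_{n,\overrightarrow{m}}$ is not even defined and Theorem~\ref{t2} is stated only for nonzero $\overrightarrow{m}$, so the argument must explicitly check it by hand as above.
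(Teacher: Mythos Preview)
Your proposal is correct and matches the paper's approach exactly: the paper simply observes that the result from~\cite{StarRjab2} gives $H_{n,\overrightarrow m}\neq 0$ for quite perfect systems, so Theorem~\ref{t2} applies, with the degenerate case $\overrightarrow{m}=(0,\ldots,0)$ handled separately in the preceding paragraph just as you do.
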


A quite perfect system is, for example, the above-mentioned system of exponents ${\bf E_k}=\left\{ e^{\lambda_jz}\right\}^k_{j=1}$. 
Let us consider another example of a system (\cite{Aptek81M-L}), satisfying the conditions of the theorem~\ref{t2}, which we will need later. 

Let ${\bf E_{k,\gamma}}=\{E_{\gamma}(\lambda_j z)\}^k_{j=1}$, where $E_{\gamma}(z)$, being simultaneously a degenerate hypergeometric function and a Mittag--Leffler function, be represented by a power series
\begin{equation}
\label{eq5.1}
E_{\gamma}(z)=\,_1F_1(1,\gamma;z)=
\sum_{p=0}^{\infty}\frac{z^p}{(\gamma)_p}\,.                                   \end{equation}
In definition ${\bf E_{k,\gamma}}$ and \eqref{eq5.1} it is assumed that the parameter $\gamma\in\mathbb{R}\setminus
\mathbb{Z_{-}}$, $\mathbb{Z_{-}}=\{0,-1,-2,\ldots\}$, $(\gamma)_0=1$,
$(\gamma)_p=\gamma(\gamma+1)\cdot\cdot\cdot(\gamma+p-1)$ --- Pochhammer symbol, and $\{\lambda_j\}_{j=1}^k$ --- distinct non-zero real numbers (for $k=1$ we assume that $\lambda_1=1$). 

Functions $E_{\gamma}(z)$ were introduced into consideration by M.\,Mittag--Leffler~\cite{Lef} as a generalization of the exponential function, in particular, $E_{1}(z)=\exp z$.
Uniform convergence of Hermite--Pad\'e approximations $\pi_j(z;{\bf E_{k,\gamma}})$ to function $E_{\gamma}(\lambda_j z)$ on compact sets in $\mathbb{C}$ for $k=1$ was proved by M.\,De~Bruyne~\cite{Bruin}, and for $k>1$ by A.I.\,Aptekarev~\cite{Aptek81M-L}. The asymptotics of this convergence was investigated in \cite{StarStek}.  In~\cite{Aptek81M-L} representations are obtained for the denominator and residual functions of the Hermite--Pad\'e approximations of system  ${\bf E_{k,\gamma}}=\{E_{\gamma}(\lambda_j z)\}^k_{j=1}$, similar to the well-known Hermite representations~(\cite[chapter 4, \S 2]{NikSor}):	    for $n\geqslant
m_j-1$\,\,\,and\,\,$j=1,2,\ldots,k$
\begin{equation}
\label{eq5.2}	                         
Q_{m}(z;{\bf E_{k,\gamma}})=
\frac{z^{n+m+\gamma}}{\Gamma(n+m+\gamma)}
\int_0^{+\infty} U_{\gamma}(x)
e^{-zx}dx,                                           
\end{equation}
\begin{equation}	  
\label{eq5.3}
R_{n,\overrightarrow{m}}^j(z;{\bf E_{k,\gamma}})=\frac{e^{\lambda_jz}z^{n+m+1}}
{\lambda_j^{\gamma-1}(\gamma)_{n+m}}
\int_0^{\lambda_j} U_{\gamma}(x)
e^{-zx}dx\,,                                              
\end{equation}
where $\Gamma(x)$ --- Euler's gamma function, and
\begin{equation}	  
\label{eq5.4}
U_{\gamma}(x)=x^{n+\gamma-1} \prod_{p=1}^k (x-\lambda_p)^{m_p}.
\end{equation}
Expanding the brackets in~\eqref{eq5.4}, we obtain
$$
U_{\gamma}(x)=x^{n+m+\gamma-1}+\ldots+bx^{n+\gamma-1}, \,\,\,b\neq 0.
$$
Since
$$
\int_0^{+\infty} x^{k+\gamma-1}e^{-zx}dx=\dfrac{\Gamma(k+\gamma)}{z^{k+\gamma}},\,\,\,k=1,2,\ldots,
$$
then
$$
Q_{m}(z;{\bf E_{k,\gamma}})=1+\ldots+b\,\frac{\Gamma(n+\gamma)}{\Gamma(n+m+\gamma)}z^m
$$
--- polynomial of degree $m$ and $Q_{m}(0;{\bf E_{k,\gamma}})=1$. 
Since it is proved in~\cite{Aptek81M-L} that for system ${\bf E_{k,\gamma}}$ for $n\geqslant
m_j-1$\,\,\,($j=1,2,\ldots,k$) problem A has a unique solution (up to a numerical factor), it follows from $Q_{m}(0;{\bf E_{k,\gamma}})=1$ that $H_{n,\overrightarrow m}\neq 0$. Then, according to theorem~\ref{t2}, for  
$(n,\overrightarrow{m})$ under condition $n\geqslant
m_j-1$\,\,\,($j=1,2,\ldots,k$) in some neighborhood of zero
\begin{equation}
\label{eq2.11}
E_{\gamma}(\lambda_j z)-\frac{P^j_{n_j}(z;{\bf E_{k,\gamma}})}{Q_{n,\overrightarrow{m}}(z;{\bf E_{k,\gamma}})}=\sum_{l=n+m+1}^{\infty}\tilde{a}^j_lz^l, \,\,\,j=1,2,\ldots,k.
\end{equation}
It is important for us that, for sufficiently large $n$, such a neighborhood can be taken as an open circle with a center at zero, the radius of which is greater than 1. This statement is a consequence of the main lemma of the work~\cite{Aptek81M-L}, since according to this lemma the zeros of $Q_{n,\overrightarrow{m}}(z;{\bf E_{k,\gamma}})$ for sufficiently large $n$ lie outside the indicated circle. From formula~\eqref{eq5.3} follows the validity of equality
$$
\tilde{a}^j_{n+m+1}= \frac{1}
{\lambda_j^{\gamma-1}(\gamma)_{n+m}}
\int_0^{\lambda_j} U_{\gamma}(x)dx.
$$
We also note that the examples given by M.\,De~Bruyne~\cite{Bruin} show that condition $n\geqslant
m_j-1$, generally speaking, is necessary for the validity of representations \eqref{eq5.2} and \eqref{eq5.3}.

\section{Trigonometric Hermite--Pad\'e and Hermite--Jacobi approximations}\label{s3}

\subsection{Trigonometric Hermite--Pad\'e approximations}\label{s3.1}
In this section we will consider the trigonometric analogue of problem~${\bf A}$. Our goal is to generalize such well-known concepts as partial sums and Pad\'e--Fourier approximations of trigonometric series.

Let ${\bf f^t}=(f^t_1,\ldots,f^t_k)$ be a set of trigonometric series
\begin{equation}
\label{eq3.1}
f^t_j(x)=\frac{a^j_0}{2}+\sum_{l=1}^{\infty}
\left (a^j_l\cos lx+b^j_l\sin lx\right ),\,\,\,
j=1,2,\ldots,k\,                                      
\end{equation}
with real coefficients. We assume that the series~\eqref{eq3.1} converge for all $x\in\mathbb{R}$, and each series defines a function defined on the entire real line. Let us fix an index $n\in
\mathbb{Z}^1_+$ and a multi-index $\overrightarrow{m}=(m_1,\ldots,m_k)$ and consider the following problem:

\vspace{0.1 cm}
{\bf Problem ${\bf A^t}$}. {\it For a set of trigonometric series~\eqref{eq3.1} find a trigonometric polynomial
	$Q^t_m(x)=Q^t_{n,\overrightarrow{m}}(x;{\bf f^t})$, $\deg Q^t_m \leqslant m$ that is identically not equal to zero and such trigonometric polynomials $P^t_j(x)=P^t_{n_j,n,\overrightarrow m}(x;{\bf f^t})$,
	$\deg P^t_j\leqslant n_j$, $n_j=n+m-m_j$, that 
	\begin{equation*}
	R^t_j(x)=R^t_{n_j,n,\overrightarrow{m}}(x;{\bf f^t}):=
	Q^t_m(x)f^t_j(x)-P^t_j(x)=
	\end{equation*}
	\begin{equation}
	\label{eq3.2}
	=\sum_{l=n+m+1}^{\infty}
	\left (\tilde{a}^j_l\cos lx+\tilde{b}^j_l\sin lx\right ),\,\,\,
	j=1,2,\ldots,k\, ,                                  
	\end{equation}
	where $\tilde{a}^j_l$, $\tilde{b}^j_l$, like the coefficients of the trigonometric polynomials $Q^t_m(x)$, $P^t_j(x)$ are, generally speaking, complex numbers.} 

Obviously, polynomials $Q^t_m(x)$,
$P^t_j(x)$ are not uniquely determined by conditions~\eqref{eq3.2}: if the pair $(Q^t_m,P^t)$, where $P^t:=(P^t_1,\ldots, P^t_k)$,
satisfies conditions~\eqref{eq3.2}, then for any non-zero complex number $\lambda$ the new pair $(\lambda
Q^t_m,\lambda P^t)$, where $\lambda P^t:=(\lambda P^t_1,\ldots, \lambda P^t_k)$, also satisfies them. In fact, non-uniqueness may be more significant even in the case $k=1$.
Let us give an example to confirm this.

\begin{example}
	\label{ex2}
	 Let $k=1, \, n=2, \, m=1$, and
$$
f(x)=\sum_{l=1}^{\infty} a_l\cos lx\,\,,
$$
where
$$
a_l=\begin{cases} 
2,&\text{if $l=1,2,4$;}\\
4,&\text{if $l=3$;}\\
\dfrac{1}{l!},&\text{if $l>4$.}
\end{cases} 
$$
Then any solution to the problem ${\bf A^t}$ can be represented in the form: $(\lambda Q^t_1,\lambda P^t_1)$,   $\lambda\in \mathbb{C}$, $\lambda\neq 0$, where polynomials $Q^t_1(x)$, $P^t_1(x)$ in complex form are defined by equalities
$$
Q^t_1(x)=ae^{-ix}-\frac{a+b}{2}  +be^{ix}\,,
$$
$$
P^t_1(x)=\frac{a+3b}{2}e^{-i2x}+\frac{b-a}{2}e^{-ix}+a+b+
\frac{a-b}{2}e^{ix}+\frac{3a+b}{2}e^{i2x}\,,
$$
in which $a$ and $b$ are arbitrary real numbers that are not equal to zero simultaneously, $i=\sqrt{-1}$.
\end{example}

\begin{definition}\label{de6}
	We will say that a problem ${\bf A^t}$ has a unique solution if this solution is unique up to a numerical factor, i.e. for any two solutions
	$(\bar{Q}^t_m,\bar{P}^t)$ and
	$(\bar{\bar{Q}}^t_m,\bar{\bar{P}}^t)$ to the problem ${\bf A^t}$ there is a complex number $\lambda$ such that $(\bar{Q}^t_m,\bar{P}^t)=
	(\lambda \bar{\bar{Q}}^t_m,\lambda \bar{\bar{P}}^t).$
\end{definition}

\begin{definition}\label{de7}
	Let the pair $(Q^t_m,P^t)$, where $P^t=(P^t_1,\ldots,P^t_k)$, be a solution to the problem ${\bf A^t}$. Then the polynomials $Q^t_m(x),\,P^t_1(x),\ldots,\, P^t_k(x)$ and rational fractions
	$$
	\pi^t_j(x;{\bf f^t})=\pi^t_{j,n,\overrightarrow{m}}(x;{\bf f^t})=\frac{P^t_j(x)}{Q^t_m(x)},\,\,\,
	j=1,2,\ldots,k\,
	$$
	will be called, respectively,   {\it trigonometric Hermite--Pad\'e polynomials} and {\it trigonometric Hermite--Pad\'e approximations} ({\it consistent Hermite--Fourier approximations}) for the multi-index $(n,\overrightarrow{m})$ and system ${\bf f^t}$.
\end{definition}

If the problem ${\bf A^t}$ has a unique solution, then the Hermite--Pad\'e trigonometric approximations $\left\{\pi^t_j(x;{\bf f^t})\right\}^k_{j=1} $ are determined uniquely. For $k=1$ a sufficient condition for the uniqueness of the solution to the problem ${\bf A^t}$ is obtained in~\cite{Star-Lab}: for uniqueness, it is sufficient that the determinant $\Delta^1(n,m)\neq 0$ (see below for the definition of $\Delta^1(n,m)$). Under this condition, explicit determinant formulas for polynomials $Q^t_m(x),\,P^t_1(x)$ are found in~\cite{Star-Lab}, similar to the known formulas for representing algebraic Pad\'e polynomials $Q_n(z)$, $P_n(z)$ (~\cite[chapter 1, \S\,1.1]{Bek}). In the case $k=1$ the Hermite--Pad\'e trigonometric approximations we defined are also called {\it trigonometric Pad\'e approximations} or {\it Pad\'e--Fourier approximations}~\cite{Bek, Star-Lab}.

Let us note one important difference between trigonometric $\pi^t_{n,m}(x)$ and algebraic $\pi_{n,m}(z)$ Pad\'e approximations. As already mentioned, for any pair of indices $(n,m)$ the algebraic Pad\'e approximations
$\pi_{n,m}(z)$ are determined uniquely. Trigonometric Pad\'e approximations do not have this property. This conclusion can be made based on example 2. Let us denote by $\pi^{t,1}_{2,1}(x)$ the trigonometric Pad\'e approximation from example 2, which corresponds to the parameters $a=b=1$, and by $\pi^{t,2}_{2,1}(x)$ the trigonometric Pad\'e approximation, which corresponds to the parameters $a=2$, $b=0$. Then we get 
$$
\pi^{t,1}_{2,1}\left (\frac{\pi}{2}\right )=2\neq \frac{-2-6i}{5}=\pi^{t,2}_{2,1}\left(\frac{\pi}{2}\right).
$$

Our immediate goal is to find necessary and sufficient conditions under which the problem ${\bf A^t}$ for the set ${\bf f^t}$ and multi-index $(n,\overrightarrow m)\in \mathbb{Z}^{k+1}_+$ has a unique solution.

Let us write the series~\eqref{eq3.1} and polynomials $Q^t_m(x), P^t_j(x)$ in complex form: 
\begin{equation}
\label{eq3.3}
f^t_j(x)=\sum_{l=-\infty}^{+\infty}c^j_l e^{ilx},\,\,\,    j=1,2,\ldots,k;                            
\end{equation}

\begin{equation}
\label{eq3.4}
Q^t_m(x)=\sum_{p=-m}^{m}u_p e^{ipx}\,, \,\,\,
P^t_j(x)=\sum_{p=-n_j}^{n_j}v^j_p e^{ipx}\,,\,\,\,j=1,2,\ldots,k,
\end{equation}
where 
$
u_p, v^j_p\in \mathbb{C},\,\,\,c^j_0=\dfrac{a^j_0}{2},\,\, c_l^j=\dfrac{a_l^j-ib_l^j}{2},\,\,
c^{j}_{-l}=\overline{c}^{\,j}_{l},\,\,j=1,2,\ldots,k;\,\, l=1,2,\ldots\,.
$
Then equalities~\eqref{eq3.2} will take the form
\begin{equation}
\label{eq3.5}
R^t_j(x)=\sum_{l=n+m+1}^{+\infty}\left (\tilde{c}^j_l e^{ilx}+\tilde{c}^j_{-l} e^{-ilx}\right ),  \,\,\,j=1,2,\ldots,k.
\end{equation}
Let us introduce into consideration matrices and determinants, the elements of which are the coefficients of the trigonometric series $f^t_j(x)$ of system ${\bf f^t}=\{f^t_1,\ldots,f^t_k\}$.

We assign each $l\in \mathbb{Z}$ to a matrix-row
$$
\mathbb{C}^j_l=\left (c^j_{l+m}\,\,\,c^j_{l+m-1}\,\,\ldots\,\,c^j_{l+1}\,\,c^j_{l}\,\,\,c^j_{l-1}\,\,\ldots\,\,c^j_{l-m+1}\,\,c^j_{l-m}\right ),\,\,j=1,2,\ldots,k,
$$
and a real number $x$ to a row matrix
$$
E^t_m(x)=\left (e^{-imx}\,\,e^{-i(m-1)x}\,\,...\,\,e^{-ix}\,\,1\,\,\,e^{ix}\,\,...\,\,e^{i(m-1)x}\,\,e^{imx}\right )\,.
$$
For a given $j\in \{1,2,\ldots,k\}$,
a fixed index $n\in
\mathbb{Z}^1_+$ and a non-zero multi-index $\overrightarrow{m}=(m_1,\ldots,
m_k)$ assuming that
$m_j\neq 0$, we define matrices of order $m_j\times(2m+1)$
\begin{equation*}
F^j_+:=\left[ \! \begin{array}{cccc}
\mathbb{C}^j_{n_j+m_j}\\
\mathbb{C}^j_{n_j+m_j-1} \\ 
\vdots\\
\mathbb{C}^j_{n_j+1}
\end{array} \!\, \right]=
\left( \! \begin{array}{cccc}
c_{n_j+m+m_j}^{j} & c_{n_j+m+m_j-1}^{j} & \ldots & c_{n_j-m+m_j}^{j} \\
c_{n_j+m+m_j-1}^{j} & c_{n_j+m+m_j-2}^{j} & \ldots & c_{n_j-m+m_j-1}^{j} \\
\ldots & \ldots & \ldots & \ldots \\
c_{n_j+m+1}^{j} & c_{n_j+m}^{j} & \ldots & c_{n_j-m+1}^{j}\\
\end{array} \! \right),
\end{equation*}

\begin{equation*}
F^j_-:=\left[ \! \begin{array}{cccc}
\mathbb{C}^j_{-n_j-1}\\
\mathbb{C}^j_{-n_j-2} \\ 
\vdots\\
\mathbb{C}^j_{-n_j-m_j}
\end{array} \!\, \right]=
\left( \! \begin{array}{cccc}
c_{-n_j+m-1}^{j} & c_{-n_j+m-2}^{j} & \ldots & c_{-n_j-m-1}^{j} \\
c_{-n_j+m-2}^{j} & c_{-n_j+m-3}^{j} & \ldots & c_{-n_j-m-2}^{j} \\
\ldots & \ldots & \ldots & \ldots \\
c_{-n_j+m-m_j}^{j} & c_{-n_j+m-m_j-1}^{j} & \ldots & c_{-n_j-m-m_j}^{j}\\
\end{array} \! \right).
\end{equation*}
Let us introduce into consideration a determinant of order $2m+1$
$$
D(n, \overrightarrow{m}; x):=\det \left[ \! \begin{array}{cccccccccc}
F^k_+  & \ldots & F^{2}_+ & F^1_+ & E^t_m(x) & F^1_-  & F^2_-& \ldots & F^k_-
\end{array} \!\, \right]^{T}=
$$
$$
:=\det \left[  \begin{array}{ccccccc}
F^k_+\\ 
\vdots\\
F^1_+ \\
E^t_m(x)\\
F^1_-\\
\vdots\\
F^k_-
\end{array}  \right].
$$
If $m_j=0$, we assume that the determinant
$D(n, \overrightarrow{m}; x)$ does not contain block matrices $F^j_{\pm}$. Let $H^t_{n,\,\overrightarrow{m}}$ denote the matrix of order $2m\times(2m+1)$, obtained from the elements of the determinant $D(n, \overrightarrow{m}; x)$ after removing the $(m+1)$-th row $E^t_m(x)$ from it. If in the determinant $D(n, \overrightarrow{m}; x)$ the row $E^t_m(x)$ is replaced by the row $\mathbb{C}^j_{l}$, we obtain a new determinant $d^j_l(n,\overrightarrow{m})$. Let $\Delta^k(n,\overrightarrow{m})$ denote the determinant of order $2m$, obtained by deleting the $(m+1)$-th row and $(m+1)$-th column in the determinant $D(n, \overrightarrow{m}; x)$.

\begin{definition}\label{de8}
	A multi-index $(n,\overrightarrow{ m})\in \mathbb{Z}^{k+1}_+$, $\overrightarrow{m}\neq (0,\ldots,0)$
	will be called {\it weakly normal} for a system $\bf f^t$,
	if $H^t_{n,\overrightarrow{m}}$ is a matrix of full rank, i.e. $rank\,
	H^t_{n,\,\overrightarrow{m}}=2m$.
\end{definition}

\begin{definition}\label{de9}
	A system $\bf f^t$ is called
	{\it weakly perfect}, if all multi-indices
	$(n,\overrightarrow{m})\in \mathbb{Z}^{k+1}_+$, $\overrightarrow{m}\neq (0,\ldots,0)$ are weakly normal for $\bf f^t$.	
\end{definition}

\begin{theorem}\label{t3}
	In order for a problem $\bf A^t$ to have a unique solution for a fixed multi-index $(n,\overrightarrow{m})$, $\overrightarrow{m}\neq (0,\ldots,0)$ and a system $\bf f^t$, it is necessary and sufficient that the multi-index $(n,\overrightarrow{m})$ be weakly normal for $\bf f^t$, i.e. $rank\, H^t_{n,\,\overrightarrow{m}}=2m$.
	
	If $rank\,
	H^t_{n,\,\overrightarrow{m}}=2m$, then for a certain choice of the normalizing factor the following representations are valid for solutions of problem $\bf A^t$: for $j=1,2,\ldots,k$
	\begin{equation}
	\label{eq3.6}
	Q^t_m(x)=D(n, \overrightarrow{m}; x)\,,             
	\end{equation}
	\begin{equation}
	\label{eq3.7}
	P_j^t(x)=
	\sum_{p=-n_j}^{n_j}d_{p}^j(n,\overrightarrow m)e^{ipx}\,,                         
	\end{equation}
	\begin{equation}
	\label{eq3.8}	
	R^t_j(x)=\sum_{p=n+m+1}^{\infty}\bigl (\,\, d_{p}^j(n,\overrightarrow m)e^{ipx}+d_{-p}^j(n,\overrightarrow m)e^{-ipx}\,\,\bigl )\,.                                    
	\end{equation}
\end{theorem}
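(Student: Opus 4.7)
The plan is to pass to the complex form of all quantities, convert problem $\mathbf{A^t}$ to a homogeneous linear system whose coefficient matrix is $H^t_{n,\overrightarrow m}$, and then derive uniqueness and the explicit formulas from standard linear algebra applied to that system.

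First, using the representations \eqref{eq3.3}--\eqref{eq3.4}, the coefficient of $e^{ilx}$ in $Q^t_m(x) f^t_j(x)$ is $\sum_{p=-m}^{m} c^j_{l-p}\,u_p = \mathbb{C}^j_l\cdot u$, where $u=(u_{-m},\ldots,u_m)^T$. The requirement \eqref{eq3.2}/\eqref{eq3.5} that the residual contain no harmonics of order $\leqslant n+m$, combined with the fact that $v^j_l$ is available exactly for $|l|\leqslant n_j$, splits into two parts: for $|l|\leqslant n_j$ one merely \emph{defines} $v^j_l := \mathbb{C}^j_l\cdot u$; for $n_j+1\leqslant |l|\leqslant n+m$ one must \emph{impose} $\mathbb{C}^j_l\cdot u = 0$. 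Collecting these vanishing conditions for $l=n_j+1,\ldots,n+m$ (these give the rows of $F^j_+$) and for $l=-n_j-1,\ldots,-n-m$ (the rows of $F^j_-$) across all $j=1,\ldots,k$, one obtains exactly $2\sum_j m_j = 2m$ linear equations in the $2m+1$ unknowns $u_{-m},\ldots,u_m$, and the coefficient matrix is by definition $H^t_{n,\overrightarrow m}$. Thus problem $\mathbf{A^t}$ (up to the free choice of $P^t_j$) is equivalent to finding $u\neq 0$ with $H^t_{n,\overrightarrow m}\, u = 0$.

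Second, since $H^t_{n,\overrightarrow m}$ is $2m\times(2m+1)$, its kernel always has dimension $\geqslant 1$; the kernel has dimension exactly $1$ if and only if $\operatorname{rank} H^t_{n,\overrightarrow m} = 2m$. Two non-proportional kernel vectors would produce two pairs $(Q^t_m,P^t)$ that are not scalar multiples of one another (the exponentials $\{e^{ipx}\}_{p=-m}^m$ being linearly independent, distinct $u$'s give distinct $Q^t_m$, hence distinct $P^t_j$ via $v^j_l=\mathbb{C}^j_l\cdot u$). This yields both necessity and sufficiency of weak normality for uniqueness.

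Third, to produce the explicit formulas under $\operatorname{rank} H^t_{n,\overrightarrow m} = 2m$, I would invoke the classical fact that the one-dimensional kernel of such a matrix is spanned by the vector of signed maximal minors: $u_p$ equals, up to a common scalar, the cofactor of the position labeled $p$ in a $(2m+1)\times(2m+1)$ determinant obtained by adjoining any row to $H^t_{n,\overrightarrow m}$. Choosing that adjoined row to be $E^t_m(x)$ inserted in the $(m+1)$-st position produces exactly $D(n,\overrightarrow m;x)$, and expansion along that row gives
\[
Q^t_m(x) = \sum_{p=-m}^{m} u_p\, e^{ipx} = D(n,\overrightarrow m;x),
\]
which is \eqref{eq3.6}. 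Substituting into $v^j_l = \mathbb{C}^j_l\cdot u$ and recognizing the resulting sum as the cofactor expansion of the determinant in which the row $E^t_m(x)$ is replaced by $\mathbb{C}^j_l$ identifies $v^j_l = d^j_l(n,\overrightarrow m)$, which gives \eqref{eq3.7} for $|l|\leqslant n_j$ and \eqref{eq3.8} for $|l|\geqslant n+m+1$. For the intermediate range $n_j+1\leqslant |l|\leqslant n+m$ the corresponding determinant $d^j_l(n,\overrightarrow m)$ vanishes automatically, because the replacement row $\mathbb{C}^j_l$ coincides with a row already present in the block $F^j_+$ or $F^j_-$.

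The main technical obstacle is purely bookkeeping: verifying that the ordering conventions adopted for the vector $(u_{-m},\ldots,u_m)^T$, the row $E^t_m(x)$, and the rows $\mathbb{C}^j_l$ propagate consistent signs through the cofactor expansion, so that $Q^t_m(x) = D(n,\overrightarrow m;x)$ and $v^j_l = d^j_l(n,\overrightarrow m)$ hold with the \emph{same} normalizing constant rather than differing by $\pm 1$ from row to row. Once a single convention is fixed at the start, the sign verification is mechanical and the rest of the proof is a direct translation between linear-algebraic statements about $H^t_{n,\overrightarrow m}$ and their interpretations as properties of the trigonometric polynomials $Q^t_m, P^t_j, R^t_j$.
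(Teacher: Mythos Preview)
Your proposal is correct and follows essentially the same route as the paper: reduce problem $\mathbf{A^t}$ to the homogeneous system $H^t_{n,\overrightarrow m}\,u=0$, invoke the rank--nullity argument for the uniqueness criterion, and extract the explicit formulas from cofactor expansions. The only cosmetic difference is that the paper fixes one coordinate (assuming, say, $\Delta^k(n,\overrightarrow m)\neq 0$), solves the remaining square system by Cramer's rule, and then recognizes the result as the Laplace expansion of $D(n,\overrightarrow m;x)$ along the adjoined row, whereas you appeal directly to the signed-minor description of the one-dimensional kernel; these are the same computation in different clothing, and your packaging is arguably cleaner.
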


\begin{proof}
	Let the desired polynomial $Q^t_m(x)$ have the form~\eqref{eq3.4}.
	After transformations we obtain
	$$
	Q^t_m(x)f^t_j(x)=\sum_{l=-\infty}^{\infty}\left  (\sum_{p=-m}^{m} c^j_{l-p} u_p\right )e^{ilx}=\sum_{l=-\infty}^{\infty}\tilde{c}^j_{l}e^{ilx}\,,
	$$
	where 
	\begin{equation}
	\label{eq3.9}	
	\tilde{c}^j_{l}=\sum_{p=-m}^{m}  c^j_{l-p}u_p\,,\,\,\, l\in \mathbb{Z}\,.                                    
	\end{equation}
	We choose the coefficients $u_p$, $p=\overline{-m,m}$ of the polynomial $Q^t_m(x)$ so that
	$$
	\tilde{c}^j_{l}=0\,,\,\,\,l=\pm(n_j+1),\ldots,\pm(n_j+m_j),\,\,\,j=1,2,\ldots,k\,,
	$$
	and we set
	$$
	P^t_j(x)=\sum_{p=-n_j}^{n_j}\tilde{c}^j_{p} e^{ipx}\,.
	$$
	It is obvious that the polynomials $Q^t_m(x)$, $P^t_j(x)$ chosen in this way satisfy conditions~\eqref{eq3.2} and~\eqref{eq3.5}. It remains to investigate the compatibility of the system of equations
	\begin{equation}
	\label{eq3.10}
	\sum_{p=-m}^{m}  c^j_{l-p}u_p=0\,,\,\,\,l=\pm(n_j+1),\ldots,\pm(n_j+m_j)\,,\,\,\,
	j=1,2,\ldots,k.      	\end{equation}
	
	System~\eqref{eq3.10} can be written in matrix form
	$$
	H^t_{n,\overrightarrow{m}} \cdot u^T=\theta^T\,,
	$$
	where $u=(u_{-m}\,\ldots\,u_{-1}\,\,u_0\,\,u_1\,\,\ldots\,u_m)$ is a row matrix of unknown coefficients, and
	$\theta $ is a row matrix of order $1 \times(2m+1)$, all elements of which are zero. Since system~\eqref{eq3.10} is homogeneous and the number of unknowns $2m+1$ in it is one greater than the number of equations $2m$, it follows from the Kronecker--Capelli theorem that system~\eqref{eq3.10} has a non-zero solution. Moreover, the set of all linearly independent solutions of system~\eqref{eq3.10}
	consists of one fundamental solution if and only if $rank\, H^t_{n,\,\overrightarrow{m}}=2m$. In this case, all other non-zero solutions are obtained by multiplying this fundamental solution by the number $\lambda\neq0$. Thus, the first part of theorem~\ref{t3} is proven.
	
	Let us now prove equalities~\eqref{eq3.6}--\eqref{eq3.8}. Since, by assumption, the rank of the matrix
	$H^t_{n,\,\overrightarrow{m}}$ is equal to $2m$, then for some $s \in
	\{1,\ldots,2m+1\}$ the determinant obtained from the elements of the matrix $H^t_{n,\,\overrightarrow{m}}$ as a result of deleting the $s$-th
	column in it is different from zero. For definiteness, let us assume that
	$s=m+1$. Then, having fixed the unknown $u_0$, we obtain a square inhomogeneous system
	\begin{equation}
	\label{eq3.11}
	\sum_{p=-m}^{-1}  c^j_{l-p}u_p+\sum_{p=1}^{m} c^j_{l-p}u_p=- c^j_{l}u_0\,,\,\,\,
	l=\pm(n_j+1),\ldots,\pm(n_j+m_j)\,,\,\,\,j=1,2,\ldots,k
	\end{equation}
	the main determinant of which $\Delta^k(n,\overrightarrow{m})\neq 0$. Note that
	$u_0\neq 0$. Otherwise, system~\eqref{eq3.11}, and therefore system~\eqref{eq3.10} would have only zero solutions. System~\eqref{eq3.11} has a unique non-zero solution, and it can be found using Cramer's formulas:
	$$
	u_p=\frac{\Delta^k_p(n,\overrightarrow{m})}{\Delta^k(n,\overrightarrow{m})},\,\,\,p=\overline{-m,m},\,\,p\neq 0,
	$$
	where $\Delta^k_p(n,\overrightarrow{m})$ is the determinant obtained from the determinant $\Delta^k(n,\overrightarrow{m})$ by replacing the $p$-th column with a column of free terms. If we put $\Delta^k_0(n,\overrightarrow{m}):=u_0\Delta^k(n,\overrightarrow{m})$, then
	\begin{equation}
	\label{eq3.12}
	Q^t_m(x)=\sum_{p=-m}^{m}u_p e^{ipx}=
	\sum_{p=-m}^{m}\dfrac{\Delta^k_p(n,\overrightarrow{m})}{\Delta^k(n,\overrightarrow{m})} \,e^{ipx}.    
	\end{equation}
	Expanding the determinant $D(n, \overrightarrow{m}; x)$ by the elements of the $(m+1)$-th row and comparing with~\eqref{eq3.12}, we conclude that
	\begin{equation}
	\label{eq3.13}
	Q^t_m(x)=u_0\frac{D(n, \overrightarrow{m}; x)}{\Delta^k(n,\overrightarrow{m})}.   
	\end{equation}
	Comparing~\eqref{eq3.9} and~\eqref{eq3.12}, we notice that to find $\tilde{c}^j_{p}$ we only need to replace $e^{ipx}$ with 
	$c^j_{l-p}$ in ~\eqref{eq3.12}. Taking into account the introduced notations, we obtain that
	$$
	\tilde{c}^j_{p}=u_0\frac{d^j_p(n, \overrightarrow{m}; x)}{\Delta^k(n,\overrightarrow{m})}.
	$$
	Therefore, the polynomial $P^t_j(x)$ and the remainder term $R^t_j(x)$ can be represented as
	\begin{equation}
	\label{eq3.14}
	P_j^t(x)=\frac{u_0}{\Delta^k(n,\overrightarrow{m})}
	\sum_{p=-n_j}^{n_j}d_{p}^j(n,\overrightarrow m)\,e^{ipx}\,,                         
	\end{equation}
	\begin{equation}
	\label{eq3.15}
	R_j^t(x)=\frac{u_0}{\Delta^k(n,\overrightarrow{m})}
	\sum_{p=n+m+1}^{\infty}\bigl (\,d_{p}^j(n,\overrightarrow m)\,e^{ipx}+d_{-p}^j(n,\overrightarrow m)\,e^{-ipx}\,\bigl )\,.                         
	\end{equation}
	Multiplying equalities~\eqref{eq3.13}--\eqref{eq3.15} by the normalizing factor $\Delta^k(n,\overrightarrow{m})/u_0$, we obtain~\eqref{eq3.6}--\eqref{eq3.8}.
	
	If, when deleting a column with number $s\neq m+1$ in the matrix $H^t_{n,\,\overrightarrow{m}}$ we obtain a determinant different from zero, then, reasoning similarly, we arrive at representations~\eqref{eq3.6}--\eqref{eq3.8}. 
	Now it remains to note that since
	$rank\,
	H^t_{n,\,\overrightarrow{m}}=2m$, the determinant $D(n, \overrightarrow{m}; x)$ cannot be identically equal to zero. Therefore, theorem~\ref{t3} is proven. 
\end{proof}

 \subsection{Remarks and corollaries}\label{s3.2}
 \begin{remark}\label{re2}
 	From representation~\eqref{eq3.6} for the polynomial $Q^t_m(x)$ it follows that the component $m_j$ of the multi-index $\overrightarrow{m}$ determines the number of coefficients of the trigonometric series $f^t_j(x)$, that are taken into account when constructing the polynomial
 	$Q^t_m(x)$. In particular, if $m_j=0$, then the determinant
 	$D(n, \overrightarrow{m}; x)$ does not contain blocks $F^j_{\pm}$ and, therefore, when constructing the polynomial
 	$Q^t_m(x)$, the trigonometric series $f^t_j(x)$ is not taken into account, and the order of the multi-index $\overrightarrow{m}$ is determined by the remaining non-zero components.
 	For example, if $\overrightarrow{m}=(m_1,0,\ldots,0)$, then $m=m_1$ and then, as in the case of $k=1$,
 	when finding $Q^t_m(x)$ only the coefficients of the series
 	$f^t_1(x)$ are taken into account. For such a multi-index, formula~\eqref{eq3.6} coincides with the formula for the denominator of the trigonometric Pad\'e approximation $\pi^t_{n,m}(x)$ of the function $f^t_1(x)$, which is obtained in~\cite{Star-Lab} under the more restrictive condition $\Delta^1(n,\overrightarrow{m})\neq 0$. 
 \end{remark}
 
 \begin{remark}\label{re3} In theorem~\ref{t3} it is assumed that the multi-index $\overrightarrow{m}$ is non-zero. If $\overrightarrow{m}=(0,\ldots,0)$, then the solution to problem $\bf A^t$ is obvious: up to a numerical factor $Q^t_m(x)\equiv 1$, and the polynomial $P^t_j(x)$ coincides with the $n$-th partial sum of the series $f^t_j(x)$.
 \end{remark}
 
 \begin{remark}\label{re4}
 	In proving theorem~\ref{t3} our assumption about the convergence of series~\eqref{eq3.1} was not taken into account. Therefore, all the statements of the theorem remain valid if series~\eqref{eq3.1} are formal. 
 \end{remark}
 
 It should also be said that if the index $(n,\overrightarrow{m})$ is not weakly normal for ${\bf f^t}$, then the polynomials
 $Q^t_m(x), P^t_1(x),\ldots,P^t_k(x)$, given by formulas~\eqref{eq3.6} and~\eqref{eq3.7}, are not solutions of problem $\bf A^t$. In particular, in example 2 the index (2,\,1) is not weakly normal, and if we calculate, for example, the polynomial $Q^t_1(x)$
 using formula~\eqref{eq3.6}, we get $Q^t_1(x)\equiv 0$. This example also shows that for a multi-index $(n,\overrightarrow{m})$ that is not weakly normal, the coefficients of the polynomials
 $Q^t_m(x)$, $P^t_j(x)$ can be complex numbers. For a weakly normal multi-index this is not the case.

\begin{corollary}
	 Let a multi-index $(n,\overrightarrow{m})$ be weakly normal for a system ${\bf f^t}$. Then the polynomials $Q^t_m(x), P^t_1(x),\ldots,P^t_k(x)$ --- solutions of problem ${\bf A^t}$, are real trigonometric polynomials.
	 \end{corollary}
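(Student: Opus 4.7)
My plan is to leverage the uniqueness granted by weak normality of $(n,\overrightarrow{m})$ together with the reality of the Fourier coefficients of each $f^t_j$. Starting from any solution $(Q^t_m, P^t_1, \ldots, P^t_k)$ of problem ${\bf A^t}$ produced by Theorem~\ref{t3}, I would consider its real and imaginary parts (as functions of the real variable $x$). Writing $Q^t_m(x)=\sum_{p=-m}^{m} u_p e^{ipx}$ as in~\eqref{eq3.4}, one has
\[
\operatorname{Re}Q^t_m(x)=\sum_{p=-m}^{m}\bigl(\operatorname{Re}u_p\cos(px)-\operatorname{Im}u_p\sin(px)\bigr),
\]
which is a real trigonometric polynomial of degree at most $m$; the analogous statement holds for each $P^t_j$ and for the imaginary parts, whose degrees are bounded by $n_j$.

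Next, using that every $f^t_j$ is real-valued on $\mathbb{R}$, I would take real and imaginary parts of the defining identity $Q^t_m(x)f^t_j(x)-P^t_j(x)=R^t_j(x)$ to obtain
\[
(\operatorname{Re}Q^t_m)\,f^t_j-\operatorname{Re}P^t_j=\operatorname{Re}R^t_j,\qquad (\operatorname{Im}Q^t_m)\,f^t_j-\operatorname{Im}P^t_j=\operatorname{Im}R^t_j,
\]
and then verify from the representation~\eqref{eq3.5} that both $\operatorname{Re}R^t_j$ and $\operatorname{Im}R^t_j$ retain the structural requirement of~\eqref{eq3.2}, namely their Fourier spectra remain supported on $\{|l|\geq n+m+1\}$. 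Hence both triples $(\operatorname{Re}Q^t_m,\operatorname{Re}P^t_1,\ldots,\operatorname{Re}P^t_k)$ and $(\operatorname{Im}Q^t_m,\operatorname{Im}P^t_1,\ldots,\operatorname{Im}P^t_k)$ are (possibly trivial) solutions of problem ${\bf A^t}$ with real coefficients.

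The decisive step is to invoke weak normality. Since $Q^t_m\not\equiv 0$, at least one of $\operatorname{Re}Q^t_m$, $\operatorname{Im}Q^t_m$ is non-zero, producing a genuine real-coefficient solution of ${\bf A^t}$. Theorem~\ref{t3} then forces this real solution to equal $\lambda\cdot(Q^t_m,P^t_1,\ldots,P^t_k)$ for some $\lambda\in\mathbb{C}\setminus\{0\}$, so the original solution is itself $\lambda^{-1}$ times a real one; rescaling by $\lambda$ gives the desired real representative. The only subtle point I anticipate is the routine verification that passing to real or imaginary parts preserves the spectrum condition on the remainder $R^t_j(x)$, i.e.\ that what remains is still an admissible right-hand side in the sense of~\eqref{eq3.2}; once this is in place, the corollary is a direct consequence of the uniqueness supplied by Theorem~\ref{t3}.
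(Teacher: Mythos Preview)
Your argument is correct, but it proceeds along a genuinely different line from the paper's own proof. The paper works directly with the explicit determinant representations~\eqref{eq3.6}--\eqref{eq3.7}: from $c^{j}_{-p}=\overline{c}^{\,j}_{p}$ one checks, by swapping the rows and columns equidistant from the edges, that $\overline{D(n,\overrightarrow{m};x)}=D(n,\overrightarrow{m};x)$ and $\overline{d^{j}_{p}(n,\overrightarrow{m})}=d^{j}_{-p}(n,\overrightarrow{m})$, which immediately makes the specific normalized polynomials real. Your route is more structural: you never touch the determinant formulas, and instead use only that $f^t_j$ is real-valued together with the uniqueness clause of Theorem~\ref{t3}. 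Splitting any solution into its real and imaginary parts yields two candidate solutions of ${\bf A^t}$ with real coefficients, at least one of which is nontrivial, and uniqueness then forces the original solution to be a scalar multiple of a real one. The paper's computation pins down that the particular representative $D(n,\overrightarrow{m};x)$ is itself real, whereas your argument shows only that a real representative exists in the one-dimensional solution space; on the other hand, your approach is independent of the explicit formulas and would apply verbatim in any setting where uniqueness holds.
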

 
 \begin{proof}
 	By assumption, the coefficients of trigonometric series~\eqref{eq3.1} are real numbers. Therefore, the following equalities are valid: $c^{j}_{-p}=\overline{c}^{\,j}_{p}$,\,\,$j=1,2,\ldots,k;$\,\, $p=1,2,\ldots$\,\,. In this case we get that
 	$$
 	\overline{D(n, \overrightarrow{m}; x)}=D(n, \overrightarrow{m}; x),\,\,\,\overline{d_{p}^j(n,\overrightarrow m)}=d_{-p}^j(n,\overrightarrow m).
 	$$
 	To verify this, it is enough to swap the rows and columns of the corresponding determinants that are equidistant from the edges. From these equalities and formulas~\eqref{eq3.6},~\eqref{eq3.7} follows the statement of corollary 2.
 \end{proof}

 \begin{corollary}
 	In order for problem $\bf A^t$ to have a unique solution for any multi-index $(n,\overrightarrow{m})$,
 	it is necessary and sufficient that the system $\bf f^t$ be weakly perfect.
 \end{corollary}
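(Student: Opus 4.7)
The plan is to derive Corollary 3 as essentially immediate from Theorem 3 together with Remark 3, by quantifying over multi-indices.

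First I would handle the non-zero multi-indices. Fix an arbitrary $(n,\overrightarrow{m})\in\mathbb{Z}^{k+1}_+$ with $\overrightarrow{m}\neq(0,\ldots,0)$. By Theorem 3, problem $\bf A^t$ has a unique solution for this multi-index if and only if $\operatorname{rank} H^t_{n,\,\overrightarrow{m}}=2m$, which by Definition 8 is exactly the statement that $(n,\overrightarrow{m})$ is weakly normal for $\bf f^t$. Therefore ``problem $\bf A^t$ has a unique solution for every non-zero multi-index'' is equivalent to ``every non-zero multi-index $(n,\overrightarrow{m})$ is weakly normal for $\bf f^t$'', which by Definition 9 is the defining property of $\bf f^t$ being weakly perfect.

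Next I would dispose of the zero multi-index $\overrightarrow{m}=(0,\ldots,0)$ using Remark 3. In that case $m=0$, so $Q^t_m(x)\equiv 1$ up to a numerical factor, and each $P^t_j(x)$ must be the $n$-th partial sum of the series $f^t_j(x)$; the solution is thus unique (up to the normalization) automatically, independently of any assumption on $\bf f^t$. Consequently this case imposes no additional condition and does not obstruct either direction of the equivalence.

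Combining the two observations yields the corollary: sufficiency follows because weak perfectness supplies the weak normality hypothesis of Theorem 3 for every non-zero $(n,\overrightarrow{m})$, while the zero case is automatic by Remark 3; necessity follows because if uniqueness holds for every multi-index then, in particular, it holds for every non-zero one, and by the converse direction of Theorem 3 each such multi-index must be weakly normal, i.e.\ $\bf f^t$ is weakly perfect by Definition 9. There is no real obstacle here, as the corollary is a quantified restatement of Theorem 3; the only minor point requiring care is to flag that the zero multi-index case must be treated separately, since Theorem 3 explicitly excludes it.
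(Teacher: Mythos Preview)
Your proposal is correct and matches the paper's approach: the paper states Corollary~3 without proof, treating it as an immediate consequence of Theorem~3 together with Definition~9, with the zero multi-index handled by Remark~3. Your write-up makes explicit exactly the (trivial) reasoning the authors leave to the reader.
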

 
 \subsection{Trigonometric Hermite--Jacobi approximations}\label{s3.3}
 Let us introduce into consideration the trigonometric Hermite--Jacobi approximations.
 
 \begin{definition}\label{de10}
 	Rational functions of the form
 	$$
 	\widehat{\pi}^t_j(x;{\bf f^t})=	\widehat{\pi}^t_{j,n,\overrightarrow{m}}(x;{\bf f^t})=\frac{\widehat{P}^t_{j}(x)}{\widehat{Q}^t_m(x)},\,\,\,
 	j=1,2,\ldots,k\,,
 	$$
 	where $\widehat{Q}^t_m(x)=\widehat{Q}^t_{n,\overrightarrow{m}}(x;{\bf f^t})$,
 	$\widehat{P}^t_{j}(x)=\widehat{P}_{n_j,n,\overrightarrow{m}}^t(x;{\bf f^t})$ --- trigonometric polynomials whose degrees are not higher than $m$ and $n_j$, respectively, $n_j=n+m-m_j$, will be called {\it trigonometric Hermite--Jacobi approximations} for a multi-index $(n,\overrightarrow{m})$ and a system of functions ${\bf f^t}$, if each $\widehat{\pi}^t_j(x;{\bf f^t})$ can be represented by a trigonometric series and for $j=1,2,\ldots,k$
 	\begin{equation}
 	\label{eq3.16}
 	f^t_j(x)-\frac{\widehat{P}^t_{j}(x)}{\widehat{Q}^t_m(x)}=\sum_{l=n+m+1}^{\infty}
 	(\tilde{a}^j_l\cos lx+\tilde{b}^j_l\sin lx ).
 	\end{equation}
 \end{definition}
 Polynomials $\widehat{Q}^t_m(x), \widehat{P}^t_{1}(x),\ldots,\widehat{P}^t_{k}(x)$, satisfying conditions~\eqref{eq3.16}, will be called {\it trigonometric Hermite--Jacobi polynomials} for the multi-index $(n,\overrightarrow{m})$ and the system ${\bf f^t}$. 
 
 Unlike the trigonometric Hermite--Pad\'e approximations, the trigonometric Hermite--Jacobi approximations may not exist, moreover, if they do exist, they may not coincide with the trigonometric Hermite--Pad\'e approximations. In particular, for $k=1$ for the pair of indices $(6, 6)$ the trigonometric Hermite--Jacobi approximations do not exist (\cite{Star-Lab}) for the non-differentiable Weierstrass function                                                                $$
 f^t(x)=\sum_{l=0}^{\infty}q^l\cos (2p+1)^lx,\,\,\,p\in \mathbb{N},\,\,0<q<1.
 $$
 In work
 \cite{Nemeth} the Gibbs effect for Pad\'e approximations of the sign function $sgn\,x$ is investigated and, in particular, for $k=1$ explicit expressions are found for fractions $\pi^t_{n,\,m}(x;s):=\pi^t_{1,n,\,m}(x;s)$,
 $\widehat{\pi}^{t}_{n,\,m}(x; s):=\widehat{\pi}^{t}_{1,n,\,m}(x; s)$, where $s(x)=sgn(\cos x)$, from which it follows that these fractions are different for all $m\geqslant 1$. 
 In the case $k=1$ other examples of functions for which trigonometric Hermite--Jacobi approximations do not exist, as well as examples of functions for which trigonometric Hermite--Jacobi approximations exist but do not coincide with trigonometric Hermite--Pad\'e approximations, can be constructed based on the results of works~\cite{Suet2, Suet1, SuetDis}.  
 
   In the work~\cite{StarKechOsn}, based on the representations of Ch.\,Hermite of the Hermite--Pad\'e polynomials of exponential functions, for an arbitrary $k\geqslant1$, examples of systems of functions are constructed for which the trigonometric Hermite--Jacobi approximations exist, but do not coincide with the trigonometric Hermite--Pad\'e approximations. The results of work~\cite{StarKechOsn} can be significantly supplemented if instead of exponential functions we take Mittag-Leffler functions.
   
   Let us consider families of trigonometric series
   \begin{equation*}
   G_{\gamma}(x;\lambda)=\sum_{l=0}^{\infty}\dfrac{\lambda^l}
   {(\gamma)_l}\cos lx\,, 
   \end{equation*}
   depending on the parameters $\lambda\in \mathbb{R}$ and $\gamma\in\mathbb{R}\setminus \mathbb{Z_{-}}$. It is easy to see that
   $G_{\gamma}(x;\lambda)$ is the real part of the function
   $E_{\gamma}(e^{i \lambda x})$.
   Considering that $\{\lambda_j\}_{j=1}^k$ are different non-zero real numbers, we define a new system of functions ${\bf G_{k,\gamma}}=\{G_{\gamma}(x;\lambda_j)\}_1^k$.

   \begin{theorem}\label{t8}
   	For a multi-index
   	$(n,\overrightarrow{m})\in \mathbb{Z}^{k+1}_+$, $\overrightarrow{m}\neq (0,\ldots,0)$, satisfying the condition $n\geqslant m_j$ $(j=1,2,\ldots,k)$, for all $n$, starting from some $n_0$, there exist trigonometric Hermite--Jacobi approximations $\{{\widehat{\pi}}^t_j(x;{\bf G_{k,\gamma}})\}_{j=1}^k$, and with the appropriate normalization their denominator can be represented in the form: 
   	\begin{equation*}
   	\widehat{Q}^t_m(x;{\bf G_{k,\gamma}})=
   	{Q}_m(e^{ix};{\bf E_{k,\gamma}})\,\cdotp\, \overline{{Q}_m(e^{ix};{\bf E_{k,\gamma}})}=
   	\end{equation*}	
   	\begin{equation*}
   	=\frac{1}{\Gamma^2(n+m+\gamma)}\left |\int\limits_0^{+\infty} U_{\gamma}(t) e^{-t(\cos x+i\sin x)} dt\right |^2. 
   	\end{equation*}   	   	
   	If, in addition, $0<\lambda_1<\lambda_2<\ldots<\lambda_k$, then among $\{{\widehat{\pi}}^t_j(x;{\bf G_{k,\gamma}})\}_{j=1}^k$ there is a trigonometric Hermite--Jacobi approximation $\widehat{\pi}^t_j(x;{\bf G_{k,\gamma}})$ that is not a trigonometric Hermite--Pad\'e approximation ${\pi}^t_j(x;{\bf G_{k,\gamma}})$.
   \end{theorem}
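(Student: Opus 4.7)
The plan is to transfer the algebraic Hermite--Pad\'e approximation of ${\bf E_{k,\gamma}}$ (recalled in \eqref{eq5.2}--\eqref{eq2.11}) into a trigonometric Hermite--Jacobi approximation of ${\bf G_{k,\gamma}}$ via the identity $G_\gamma(x;\lambda)=\operatorname{Re} E_\gamma(\lambda e^{ix})$ and the substitution $z=e^{ix}$. Non-coincidence with the trigonometric Hermite--Pad\'e approximation will then be detected by an explicit computation of a single Fourier coefficient.

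First I fix $n$ so large that (a) $n\geqslant m_j$ for each $j$, and (b) all zeros of $Q_m(z;{\bf E_{k,\gamma}})$ lie strictly outside the closed unit disk; the latter holds for $n\geqslant n_0$ by Aptekarev's main lemma of \cite{Aptek81M-L}, invoked just after \eqref{eq2.11}. Since $U_\gamma$ is real on $\mathbb{R}_+$, representations \eqref{eq5.2}--\eqref{eq5.3} force $Q_m$ and $P_{n_j}^j$ to have real coefficients, whence $\overline{Q_m(e^{ix})}=Q_m(e^{-ix})$. Substituting $z=e^{ix}$ in \eqref{eq2.11} and taking real parts gives
\begin{equation*}
G_\gamma(x;\lambda_j)-\frac{\tfrac12\!\left[P_{n_j}^j(e^{ix})Q_m(e^{-ix})+P_{n_j}^j(e^{-ix})Q_m(e^{ix})\right]}{Q_m(e^{ix})Q_m(e^{-ix})}=\operatorname{Re}\frac{R_{n,\overrightarrow m}^j(e^{ix})}{Q_m(e^{ix})}.
\end{equation*}
Because $1/Q_m(z)$ is holomorphic in some disk $|z|<\rho$ with $\rho>1$, its Taylor expansion at $0$ contributes only non-negative Fourier frequencies on $|z|=1$; multiplication by $R^j(z)=O(z^{n+m+1})$ produces a series supported on frequencies $\geqslant n+m+1$, and passing to the real part yields an expansion of the form \eqref{eq3.16}. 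Hence $\widehat Q_m^t(x):=Q_m(e^{ix})\overline{Q_m(e^{ix})}$, together with the numerator above, furnishes the required Hermite--Jacobi approximation. The announced integral formula follows by substituting \eqref{eq5.2}; the phase factors $e^{\pm ix(n+m+\gamma)}$ cancel.

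For the non-coincidence, suppose to the contrary that $\widehat\pi_j^t=\pi_j^t$ for every $j$. Then $(\widehat Q_m^t,\widehat P_j^t)$ solves problem $\bf A^t$, so multiplying the Jacobi identity above by $\widehat Q_m^t$,
\begin{equation*}
\widehat Q_m^t(x)\,G_\gamma(x;\lambda_j)-\widehat P_j^t(x)=\tfrac12\!\left[Q_m(e^{-ix})R^j(e^{ix})+Q_m(e^{ix})R^j(e^{-ix})\right]
\end{equation*}
must have Fourier support in $\{|l|\geqslant n+m+1\}$. Extracting the coefficient of $e^{i(n+1)x}$, only the first summand contributes (the second has Fourier support in $\{l\leqslant -n-1\}$), and it equals $\tfrac12\,b_m\,\alpha_j$, where $b_m=(-1)^m\prod_p\lambda_p^{m_p}\,\Gamma(n+\gamma)/\Gamma(n+m+\gamma)$ is the leading coefficient of $Q_m$ and $\alpha_j=(\lambda_j^{\gamma-1}(\gamma)_{n+m})^{-1}\int_0^{\lambda_j}U_\gamma(t)\,dt$ by \eqref{eq5.3}. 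For $j=1$ under $0<\lambda_1<\cdots<\lambda_k$, the integrand $t^{n+\gamma-1}\prod_p(t-\lambda_p)^{m_p}$ has constant sign $(-1)^m$ on $(0,\lambda_1)$, so $\alpha_1\neq 0$; since $b_m\neq 0$ and $n+1\leqslant n+m$, this contradicts the Hermite--Pad\'e vanishing condition, and therefore $\widehat\pi_1^t\neq\pi_1^t$.

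The main obstacle I anticipate is ensuring that $R^j(e^{ix})/Q_m(e^{ix})$ really admits a \emph{one-sided} Fourier expansion starting at frequency $n+m+1$ rather than having two-sided support. This depends on $1/Q_m$ being holomorphic on a neighborhood of the closed unit disk, so Aptekarev's zero-location lemma, and hence the hypothesis $n\geqslant n_0$, is indispensable.
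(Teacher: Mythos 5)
Your proposal is correct and follows essentially the same route as the paper: substitute $z=e^{ix}$ into the algebraic Hermite--Pad\'e relation \eqref{eq2.11} for ${\bf E_{k,\gamma}}$, take real parts, use Aptekarev's zero-location lemma and the reality of the coefficients to identify $\operatorname{Re}\{P^j_{n_j}(e^{ix})/Q_m(e^{ix})\}$ as a trigonometric Hermite--Jacobi approximation (with the degree bound $n_j\geqslant m$ coming from $n\geqslant m_j$), and then detect non-coincidence through the coefficient at frequency $n+1$ of $\widehat Q^t_m\,G_\gamma-\widehat P^t_j$, which is proportional to $b_m\,\tilde a^j_{n+m+1}$ exactly as in the paper's \eqref{eq3.24}. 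Your explicit sign argument on $(0,\lambda_1)$ for $\tilde a^1_{n+m+1}\neq0$ supplies the detail the paper dismisses as ``easy to show.''
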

 \begin{proof}
 	The polynomial $Q_m(z;{\bf  E_{k,\gamma}})$ is represented by formula~\eqref{eq5.2}, and in the circle $|z|\leqslant 1$ equalities~\eqref{eq2.11} are valid. In~\eqref{eq2.11} we set $z=e^{ix}$, and then equate the real parts of the expressions to the left and right of the new equality sign. As a result we get that
 	\begin{equation}
 	\label{eq3.21}
 	G_{\gamma}(x;\lambda_j)-Re\left \{\dfrac{P^j_{n_j}(e^{ix};{\bf E_{k,\gamma}})}{Q_m(e^{ix};{\bf E_{k,\gamma}})} \right \} =\sum_{l=n+m+1}^{\infty}
 	\tilde{a}^j_l\cos lx. 
 	\end{equation}
 	It remains to show that for $n\geqslant \max\{ m_j: 1\leqslant j\leqslant k\}$
 	\begin{equation}
 	\label{eq3.22}
 	\widehat{\pi}^t_j(x;{\bf G_{k,\gamma}})=Re\left \{\dfrac{P^j_{n_j}(e^{ix};{\bf E_{k,\gamma}})}{Q_m(e^{ix};{\bf E_{k,\gamma}})} \right \}.
 	\end{equation}
 	
 	To do this, we represent the polynomials $Q_m(z;{\bf E_{k,\gamma}})$, $P^j_{n_j}(z;{\bf E_{k,\gamma}})$ in the form
 	$$
 	Q_m(z;{\bf E_{k,\gamma}})=\sum_{p=0}^m b_pz^p=1+...+b\,\frac{\Gamma(n+\gamma)}{\Gamma(n+m+\gamma)}z^m,\,\,\,P^j_{n_j}(z;{\bf E_{k,\gamma}})=\sum_{p=0}^{n_j} a^j_pz^p.
 	$$	
 	Since $ b_p$ and $a^j_p$ are real numbers, then for $z=e^{ix}$
 	$$
 	Re\left \{\pi^j_{n_j,n,\overrightarrow{m}}(e^{ix};{\bf E_{k,\gamma}})\right \}
 	=\frac{1}{2}\,\left(\frac{P^j_{n_j}(z)}{Q_m(z)}+
 	\frac{\overline{P^j_{n_j}(z)}}{\overline{Q_m(z)}}\right)=
 	$$	
 	\begin{equation}
 	\label{eq3.31}
 	=\frac{1}{2}\frac{\sum^{n_j}_{p=0} a^j_p\,e^{ikx}\cdot \sum^m_{l=0}
 		b_l\,e^{-ilx}+\sum^{n_j}_{p=0} a_p\,e^{-ipx}\cdot \sum^m_{s=0}
 		b_s\,e^{isx}}{\sum^m_{s=0} b_s\,e^{isx}\cdot \sum^m_{p=0}
 		b_p\,e^{-ipx}}\,.
 		\end{equation}
 	Notice, that
 	$$
 	\sum^m_{s=0} b_s\,e^{isx}\cdot \sum^m_{l=0}
 	b_l\,e^{-ilx}=\sum^m_{s=0}\sum^m_{l=0}b_s\,b_l\cos (s-l)x\,,
 	$$
 	$$
 	\sum^{n_j}_{p=0} a^j_p\,e^{ikx}\cdot \sum^m_{l=0}
 	b_l\,e^{-ilx}+\sum^{n_j}_{p=0} a_p\,e^{-ipx}\cdot \sum^m_{s=0}
 	b_s\,e^{isx}
 	=2\sum^{n_j}_{p=0}\sum^m_{l=0}a^j_p\,b_l\cos (p-l)x\,.
 	$$
 	Therefore, for $j=1,2,...\,,k$
 	\begin{equation}
 	\label{eq3.23}
 	Re\left \{\pi^j_{n_j,n,\overrightarrow{m}}(e^{ix};{\bf E_{k,\gamma}})\right \}
 	=
 	\frac{\sum^{n_j}_{p=0}\sum^m_{l=0}a^j_p\,b_l\cos
 		(p-l)x}{\sum^m_{s=0}\sum^m_{l=0}b_s\,b_l\cos (s-l)x}=:\dfrac{\widetilde{P}^t_j(x)}{\widetilde{Q}^t_m(x)}.
 	\end{equation}
 	From the condition $n\geqslant m_j$ $(j=1,2,\ldots,k)$ on the multi-index $(n,\overrightarrow{m})$ it follows that $n_j\geqslant m$. Therefore, taking into account~\eqref{eq3.23}, we obtain that $\deg \widetilde{Q}^t_m\leqslant m$, $\deg \widetilde{P}^t_j\leqslant n_j$, $j=1,2,\ldots,k$. Therefore, identity~\eqref{eq3.22} and the first part of theorem~\ref{t8} are proved.
 	
 	Let us prove the second part of the theorem.
 	From~\eqref{eq3.21} and~\eqref{eq3.23}
 	it follows that
 	$$
 	G_{\gamma}(x;\lambda_j)-{\widehat{\pi}}^t_j(x;{\bf G_{k,\gamma}})=	G_{\gamma}(x;\lambda_j)-\dfrac{\widetilde{P}^t_j(x)}{\widetilde{Q}^t_m(x)}=\sum_{l=n+m+1}^{\infty}
 	\tilde{a}^j_l\cos lx. 
 	$$
 	Using the trigonometric formula for the product of two cosines, from the last equality and from~\eqref{eq5.3} and~\eqref{eq3.23} we obtain
 	\begin{equation}
 	\label{eq3.24}
 	\widetilde{Q}^t_m(x)\,G(x;\lambda_{j})-\widetilde{P}^t_{j}(x)
 	=\alpha^j_{n+1}\cos (n+1)x+\alpha^j_{n+2}\cos (n+2)x+\ldots\,,
 	\end{equation}
 	where $\alpha^j_{n+1}=2b_mb_0\,\tilde{a}^j_{n+m+1}$. Let us recall that
 	$$
 b_0=1,\,\,\,	b_m=b\,\frac{\Gamma(n+\gamma)}{\Gamma(n+m+\gamma)},\,\,\, \tilde{a}^j_{n+m+1}= \frac{1}
 	{\lambda_j^{\gamma-1}(\gamma)_{n+m}}
 	\int_0^{\lambda_j} U_{\gamma}(x)dx.
 	$$
 	 	If $0<\lambda_1<\lambda_2<\ldots<\lambda_k$, then it is easy to show that, for example, $\tilde{a}^1_{n+m+1}\neq 0$, and then in~\eqref{eq3.24} $\alpha^1_{n+1}\neq 0$. In this case, from the definition of the trigonometric Hermite--Pad\'e approximations it follows that $\widehat{\pi}^t_1(x;{\bf G_{k,\gamma}})\neq {\pi}^t_1(x;{\bf G_{k,\gamma}}).$
 	
 \end{proof}
 
 \section{Linear and nonlinear Hermite--Chebyshev approximations}\label{s4}
 
 In this paragraph, the terminology is partially borrowed from work~\cite{Suet2}.
 
 \subsection{Linear Hermite--Chebyshev approximations}\label{s4.1}
 Let the set ${\bf f^{ch}}=(f^{ch}_1,\ldots,f^{ch}_k)$ consist of functions represented by Fourier series of Chebyshev polynomials
 $T_n(x)=\cos(n\arccos x)$	
 \begin{equation}
 \label{eq4.1}	
 f^{ch}_j(x)=\frac{a^j_0}{2}+\sum_{l=1}^{\infty}
 a^j_lT_l(x),\,\,\,
 j=1,2,\ldots,k\,                                      
 \end{equation}
 with real coefficients. Let us fix an index $n\in
 \mathbb{Z}^1_+$ and a multi-index $\overrightarrow{m}=(m_1,\ldots,m_k)\in \mathbb{Z}^k_+$ and consider the following problem.
 
 \vspace{0.1 cm}
 {\bf Problem ${\bf A^{ch}}$}. {\it For a system of functions ${\bf f^{ch}}$ find a polynomial
 	$
 	Q^{ch}_m(x)=Q^{ch}_{n,\overrightarrow{m}}(x;{\bf f^{ch}})=\sum_{p=0}^{m}u_pT_p(x)
 	$ 
 	that is identically not equal to zero and such polynomials
 	$
 	P^{ch}_j(x)=P^{ch}_{n_j,n,\overrightarrow m}(x;{\bf f^{ch}})=\sum_{p=0}^{n_j}v^j_p\,T_p(x)
 	$, $n_j=n+m-m_j$, 
 	that for $j=1,2,\ldots,k$
 	\begin{equation}
 	\label{eq4.2}
 	Q^{ch}_m(x)f^{ch}_j(x)-P^{ch}_j(x)
 	=\sum_{l=n+m+1}^{\infty}\tilde{a}^j_l\,T_l(x)\,,                                
 	\end{equation}
 	where $\tilde{a}^j_l$ and the coefficients of the polynomials $Q^{ch}_m(x)$, $P^{ch}_j(x)$ are, generally speaking, complex numbers.}

 \begin{definition}\label{de11}
 	If the pair $(Q^{ch}_m,P^{ch})$, where $P^{ch}=(P^{ch}_1,\ldots,P^{ch}_k)$, is a solution to problem ${\bf A^{ch}}$, then the polynomials $Q^{ch}_m(x),\,P^{ch}_1(x),\ldots,P^{ch}_k(x)$ and rational fractions
 	$$
 	\pi^{ch}_j(x;{\bf f^{ch}})=\pi^{ch}_{n_j,n,\overrightarrow{m}}(x;{\bf f^{ch}})=\frac{P^{ch}_j(x)}{Q^{ch}_m(x)},\,\,\,
 	j=1,2,\ldots,k
 	$$
 	will be called, respectively, {\it Hermite--Chebyshev polynomials} and {\it linear Hermite--Chebyshev approximations} for the multi-index  $(n,\overrightarrow{m})$ and the system ${\bf f^{ch}}$.
 \end{definition}
 
 The solution to problem ${\bf A^{ch}}$ can be obtained based on theorem~\ref{t3}. To do this, in~\eqref{eq4.1} we replace $x$ with $\cos x$. Then, putting $\tilde{f}_j^t(x)=f_j^{ch}(\cos x)$, we obtain a set of trigonometric series ${\bf \tilde{f}^t}=(\tilde{f}^t_1,\ldots,\tilde{f}^t_k)$ of the form
 \begin{equation*}
 \label{eq4.3}
 \tilde{f}^t_j(x)=\frac{a^j_0}{2}+\sum_{l=1}^{\infty}
 a^j_l\cos lx,\,\,\,
 j=1,2,\ldots,k.                                    
 \end{equation*}
 For a system ${\bf \tilde{f}^t}$ problem ${\bf A^{t}}$ has a unique solution only if the multi-index $(n,\overrightarrow{m})\in \mathbb{Z}^{k+1}_+$ is weakly normal for $\bf \tilde{f}^t$. For a weakly normal index, the trigonometric Hermite--Pad\'e polynomials, with an appropriate choice of the normalizing factor, can be represented in the form
 \begin{equation}
 \label{eq4.4}
 Q^t_m(x;{\bf \tilde{f}^t})=D(n, \overrightarrow{m}; x)\,,            
 \end{equation}
 \begin{equation}
 \label{eq4.5}
 P_j^t(x;{\bf \tilde{f}^t})=
 \sum_{p=-n_j}^{n_j}d_{p}^j(n,\overrightarrow m)e^{ipx}\,                          
 \end{equation}
 and for them
 \begin{equation}
 \label{eq4.6}	
 Q^t_m(x;{\bf \tilde{f}^t})\tilde{f}^t_j(x)-P^t_j(x;{\bf \tilde{f}^t})=\sum_{p=n+m+1}^{\infty}\bigl (\,\, d_{p}^j(n,\overrightarrow m)e^{ipx}+d_{-p}^j(n,\overrightarrow m)e^{-ipx}\,\,\bigl )\,.                                    
 \end{equation}
 Since for the system $\bf \tilde{f}^t$ the coefficients $b_l^j=0$ for $l=1,2,\ldots;\,\,\,j=1,2,\ldots,k$, the block matrices $F^j_{\pm}$ for this system have the form:
 \begin{equation*}
 F^j_+=
 \left( \! \begin{array}{cccc}
 h_{n_j+m+m_j}^{j} & h_{n_j+m+m_j-1}^{j} & \ldots & h_{n_j-m+m_j}^{j} \\
 h_{n_j+m+m_j-1}^{j} & h_{n_j+m+m_j-2}^{j} & \ldots & h_{n_j-m+m_j-1}^{j} \\
 \ldots & \ldots & \ldots & \ldots \\
 h_{n_j+m+1}^{j} & h_{n_j+m}^{j} & \ldots & h_{n_j-m+1}^{j}\\
 \end{array} \! \right),
 \end{equation*}
 
 \begin{equation*}
 F^j_-=
 \left( \! \begin{array}{cccc}
 h_{-n_j+m-1}^{j} & h_{-n_j+m-2}^{j} & \ldots & h_{-n_j-m-1}^{j} \\
 h_{-n_j+m-2}^{j} & h_{-n_j+m-3}^{j} & \ldots & h_{-n_j-m-2}^{j} \\
 \ldots & \ldots & \ldots & \ldots \\
 h_{-n_j+m-m_j}^{j} & h_{-n_j+m-m_j-1}^{j} & \ldots & h_{-n_j-m-m_j}^{j}\\
 \end{array} \! \right),
 \end{equation*}
 where $h_{p}^{j}=a_{p}^{j}/2$, $j=1,2,\ldots,k$; $p\in \mathbb{Z}$. 
 Then it is easy to verify that in equalities~\eqref{eq4.4}--\eqref{eq4.6} the coefficients at the powers of $e^{ipx}$ and
 $e^{-ipx}$ coincide: 
 $$
 d_{p}^j(n,\overrightarrow m)=d_{-p}^j(n,\overrightarrow m),\,\ p=1,2,\ldots;\,j=1,2,\ldots,k; \ u_l=u_{-l},\,\,\, l=1,2,\ldots,m.
 $$ 
 Therefore~\eqref{eq4.4}--\eqref{eq4.6} can be rewritten (taking into account the notation~\eqref{eq3.4} for $Q^t_m(x)$) in the form:
 $$
 Q^t_m(x;{\bf \tilde{f}^t})=u_0+
 \sum_{p=1}^{m} 2\,u_p\cos px\,,             
 $$
 $$
 P_j^t(x;{\bf \tilde{f}^t})=d^j_0(n,\overrightarrow m)+
 \sum_{p=1}^{n_j} 2\,d_{p}^j(n,\overrightarrow m)\cos px\,,                          
 $$
 $$	
 Q^t_m(x;{\bf \tilde{f}^t})f^t_j(x)-P^t_j(x;{\bf \tilde{f}^t})=\sum_{p=n+m+1}^{\infty} 2\, d_{p}^j(n,\overrightarrow m)\cos px\,.                                   
 $$
 If we replace $x$ with $\arccos x$ here, we get
 \begin{equation}
 \label{eq4.7}
 Q^{ch}_m(x;{\bf f^{ch}})=Q^t_m(\arccos x;{\bf \tilde{f}^t})=u_0+
 \sum_{p=1}^{m} 2\,u_p T_p(x)\,,            
 \end{equation}
 \begin{equation}
 \label{eq4.8}
 P_j^{ch}(x;{\bf f^{ch}})=P_j^t(\arccos x;{\bf \tilde{f}^t})=d^j_0+
 \sum_{p=1}^{n_j} 2\,d_{p}^j(n,\overrightarrow m)\,T_p(x)\,,                        
 \end{equation}
 \begin{equation}
 \label{eq4.9}	
 Q^{ch}_m(x;{\bf f^{ch}})f^{ch}_j(x)-P^{ch}_j(x;{\bf f^{ch}})=\sum_{p=n+m+1}^{\infty} 2\, d_{p}^j(n,\overrightarrow m)\,T_p(x)\,.                                   
 \end{equation}
 
 Note that if the multi-index $(n,\overrightarrow{m})\in \mathbb{Z}^{k+1}_+$ is not weakly normal for ${\bf \tilde{f}^t}$, then in this case the solution to problem ${\bf A^t}$ for the system of functions ${\bf \tilde{f}^t}$ is not unique, and therefore the solution to the corresponding problem ${\bf A^{ch}}$ is also not unique. Thus the following theorem is proved.
  
 \begin{theorem}\label{t6}
 	In order for problem $\bf A^{ch}$ to have a unique solution for a fixed multi-index $(n,\overrightarrow{m})$, $\overrightarrow{m}\neq (0,\ldots,0)$ and a system of functions $\bf f^{ch}$, defined by equalities~\eqref{eq4.1}, it is necessary and sufficient that the multi-index $(n,\overrightarrow{m})$ be weakly normal for the system $\bf \tilde{f}^t$.
 	
 	If the solution to problem $\bf A^{ch}$ is unique, then for a certain choice of the normalizing factor, representations~\eqref{eq4.7}--\eqref{eq4.9} are valid for its solutions.
 \end{theorem}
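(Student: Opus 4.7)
The plan is to reduce Problem $\bf A^{ch}$ to the trigonometric Problem $\bf A^{t}$ for a specially constructed system $\bf \tilde{f}^t$, then invoke Theorem~\ref{t3}. Concretely, I would perform the substitution $x \mapsto \cos x$ in~\eqref{eq4.1}, using the identity $T_l(\cos x)=\cos lx$, to associate to ${\bf f^{ch}}$ the cosine-only trigonometric system $\tilde{f}^t_j(x)=f^{ch}_j(\cos x)$. Observe that this substitution sets up a bijection between polynomial solutions of Problem $\bf A^{ch}$ of the prescribed degrees and trigonometric polynomial solutions of Problem $\bf A^{t}$ of the same degrees for $\bf \tilde{f}^t$, because any trigonometric polynomial of degree at most $N$ that is even in $x$ is of the form $\sum_{p=0}^N c_p \cos px=\sum_{p=0}^N c_p T_p(\cos x)$.

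Next I would apply Theorem~\ref{t3} to $\bf \tilde{f}^t$: Problem $\bf A^{t}$ for this system has a unique solution if and only if $(n,\overrightarrow{m})$ is weakly normal for $\bf \tilde{f}^t$, and in that case the solution is given (up to a normalizing factor) by formulas~\eqref{eq4.4}--\eqref{eq4.6}. The key point to verify is that, for the cosine-only system $\tilde{\bf f}^t$, the Fourier coefficients satisfy $c^j_{-l}=c^j_l=a^j_l/2$ (that is, $b^j_l=0$), so swapping in the block determinants the rows and columns equidistant from the center does not change their values. This yields the symmetry relations $u_p=u_{-p}$ and $d_p^j(n,\overrightarrow{m})=d_{-p}^j(n,\overrightarrow{m})$, allowing formulas~\eqref{eq4.4}--\eqref{eq4.6} to be rewritten as sums of cosines.

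Then I would substitute back $x\mapsto \arccos x$, turning $\cos px$ into $T_p(x)$, to obtain the representations~\eqref{eq4.7}--\eqref{eq4.9}. For the necessity direction I would argue contrapositively: if $(n,\overrightarrow{m})$ is not weakly normal for $\bf \tilde{f}^t$, then Problem $\bf A^t$ for $\bf \tilde{f}^t$ admits two linearly independent (modulo scalars) solutions $(\bar{Q}^t_m,\bar{P}^t)$ and $(\bar{\bar{Q}}^t_m,\bar{\bar{P}}^t)$; taking their even parts in $x$ (which remain solutions of the same trigonometric problem, since $\tilde{f}^t_j$ is even) produces two non-proportional even trigonometric solutions which, after the substitution $\arccos x$, give two non-proportional solutions of Problem $\bf A^{ch}$.

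The main obstacle I anticipate is verifying cleanly that the trigonometric problem and the Chebyshev problem are genuinely equivalent under the substitution -- specifically, that the non-uniqueness in the trigonometric setting cannot be \emph{absorbed} by oddness. This is why the ``even part'' argument is needed in the necessity direction; once one notes that evenness of $\tilde{f}^t_j$ ensures that the even part of any solution is again a solution, and that the space of even solutions has the same dimension as the space of Chebyshev solutions, everything goes through. The rest is bookkeeping on the block structure of $D(n,\overrightarrow{m};x)$ under $c^j_{-l}=c^j_l$.
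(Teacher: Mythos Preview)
Your plan coincides with the paper's proof: substitute $x\mapsto\cos x$ to pass from $\mathbf{f}^{ch}$ to the even trigonometric system $\tilde{\mathbf{f}}^t$, invoke Theorem~\ref{t3}, use the symmetry $c^j_{-l}=c^j_l$ to get $u_{-p}=u_p$ and $d^j_{-p}(n,\overrightarrow m)=d^j_p(n,\overrightarrow m)$, rewrite \eqref{eq4.4}--\eqref{eq4.6} as cosine sums, and substitute $x\mapsto\arccos x$ to obtain \eqref{eq4.7}--\eqref{eq4.9}. For the necessity direction the paper is actually terser than you: it simply asserts in one line that non-uniqueness for $\mathbf{A}^t$ forces non-uniqueness for $\mathbf{A}^{ch}$, with no even-part discussion at all.

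The obstacle you anticipate is genuine, and your even-part fix does not fully close it. Because each $\tilde{f}^t_j$ is even, the kernel of $H^t_{n,\overrightarrow m}$ is invariant under the involution $u_p\mapsto u_{-p}$ and splits as an even part plus an odd part; the $\mathbf{A}^{ch}$ solutions correspond only to the even part. If the full kernel is two-dimensional but decomposes as a one-dimensional even piece plus a one-dimensional odd piece (for instance $k=1$, $m=1$, $a_n=a_{n+2}\neq 0$, where $H^t_{n,1}$ has two identical rows), then taking even parts of any two independent $\mathbf{A}^t$ solutions yields proportional (or zero) polynomials, so $\mathbf{A}^{ch}$ remains unique while $(n,\overrightarrow m)$ is not weakly normal for $\tilde{\mathbf{f}}^t$. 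The paper's one-line assertion has exactly the same gap, so your argument is at least as complete as the published one; just be aware that the ``necessary'' half, as written, is not fully justified by the even-part manoeuvre alone.
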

 
 \begin{remark}\label{r6}
 	Equalities~\eqref{eq4.7}--\eqref{eq4.9} define Hermite--Chebyshev polynomials and residual functions only for $x\in [-1, 1]$. For $x\notin [-1, 1]$ the values of the polynomials and residual functions are found using analytic continuation into the complex plane.
 \end{remark}
 
 Let us present several obvious corollaries of theorem~\ref{t6}.
 \begin{corollary}
 Let a multi-index $(n,\overrightarrow{m})$ be weakly normal for a system $\bf \tilde{f}^t$. Then the coefficients of the polynomials $Q^{ch}_m(x;{\bf f^{ch}}), P^{ch}_1(x;{\bf f^{ch}}),\ldots,P^{ch}_k(x;{\bf f^{ch}})$ are real numbers.
\end{corollary}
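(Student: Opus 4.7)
The plan is to combine the reality statement from Corollary 2 (applied to $\bf \tilde{f}^t$) with the additional even-symmetry of the coefficients that was already noted just before the statement of Theorem~\ref{t6}. Recall that Theorem~\ref{t6} produces the Chebyshev polynomials via the substitution $x\mapsto \arccos x$ applied to the trigonometric Hermite--Pad\'e polynomials $Q^t_m(x;{\bf \tilde f^t})$ and $P^t_j(x;{\bf \tilde f^t})$, yielding the explicit formulas~\eqref{eq4.7} and~\eqref{eq4.8}. Thus the entire question reduces to showing that the scalars $u_p$ and $d^j_p(n,\overrightarrow m)$ appearing there are real.

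First, since $\bf f^{ch}$ has real coefficients $a^j_l$, the trigonometric system $\bf \tilde f^t$ has real Fourier coefficients; in complex form, $c^j_l=a^j_l/2=c^j_{-l}\in\mathbb{R}$. By hypothesis the multi-index $(n,\overrightarrow m)$ is weakly normal for $\bf\tilde f^t$, so Theorem~\ref{t3} provides the determinantal representations~\eqref{eq4.4} and~\eqref{eq4.5}, with coefficients $u_p$ and $d^j_p(n,\overrightarrow m)$ a priori in $\mathbb{C}$. Corollary~2, applied to $\bf\tilde f^t$, asserts that these polynomials are in fact \emph{real} trigonometric polynomials, which is exactly the hermitian symmetry
\[
u_{-p}=\overline{u_p},\qquad d^j_{-p}(n,\overrightarrow m)=\overline{d^j_p(n,\overrightarrow m)}.
\]

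Second, I would invoke the block structure of $F^j_\pm$ for $\bf\tilde f^t$ displayed just before the statement of Theorem~\ref{t6}. Since $c^j_l=c^j_{-l}$, the entries of $F^j_+$ and $F^j_-$ are related by an index reversal; interchanging rows and columns of the determinants $D(n,\overrightarrow m;x)$ and $d^j_l(n,\overrightarrow m)$ that are equidistant from the centre (exactly as in the argument given for Corollary~2) gives the further even-symmetry
\[
u_{-p}=u_p,\qquad d^j_{-p}(n,\overrightarrow m)=d^j_p(n,\overrightarrow m),
\]
which was already recorded in the text. Combining this with the hermitian symmetry from the first step yields $u_p=\overline{u_p}$ and $d^j_p(n,\overrightarrow m)=\overline{d^j_p(n,\overrightarrow m)}$, so every coefficient is real.

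Substituting these real values back into~\eqref{eq4.7} and~\eqref{eq4.8}, together with the fact that the Chebyshev polynomials $T_p(x)$ themselves have real coefficients, gives the conclusion. The only slightly delicate point is the clean verification of the symmetry $d^j_{-p}(n,\overrightarrow m)=d^j_p(n,\overrightarrow m)$ at the determinantal level, but this is immediate from the row/column reversal trick already used in Corollary~2, so no real obstacle is expected.
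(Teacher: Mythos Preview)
Your argument is correct. The paper itself states this only as an ``obvious corollary'' of Theorem~\ref{t6} and gives no separate proof, so there is nothing to compare against line by line.

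That said, your route is slightly more elaborate than necessary. You combine the hermitian symmetry $u_{-p}=\overline{u_p}$ from Corollary~2 with the even symmetry $u_{-p}=u_p$ to deduce $u_p\in\mathbb{R}$. But for the particular system $\bf\tilde f^t$ one has $b^j_l=0$, so $c^j_l=a^j_l/2$ are already \emph{real} numbers for every $l\in\mathbb{Z}$. Hence every entry of the matrices $F^j_\pm$ (written explicitly just before Theorem~\ref{t6} with entries $h^j_p=a^j_p/2$) is real, and the cofactors $u_p$ and the determinants $d^j_p(n,\overrightarrow m)$ are real directly, without invoking any symmetry at all. This is presumably why the paper calls the corollary obvious. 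Your two-step symmetry argument is not wrong, just redundant; the ``slightly delicate point'' you flag about verifying $d^j_{-p}=d^j_p$ is in fact not needed for the reality conclusion.
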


 \begin{corollary}
 	In order for problem $\bf A^{ch}$ to have a unique solution for any multi-index $(n,\overrightarrow{m})$,
 	it is necessary and sufficient that the system of functions $\bf \tilde{f}^t$ be weakly perfect.
\end{corollary}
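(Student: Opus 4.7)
The plan is to derive this corollary as an immediate logical consequence of Theorem~\ref{t6}, combined with Definition~\ref{de9} of a weakly perfect system and the trivial handling of the zero multi-index. No new analytic content beyond Theorem~\ref{t6} is required; the argument is purely a quantification step.

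First, I would dispose of the edge case $\overrightarrow{m}=(0,\ldots,0)$, exactly as in Remark~\ref{re3}. In this situation the problem $\bf A^{ch}$ degenerates: $Q^{ch}_m(x)$ must be a non-zero constant, which we may normalize to $1$, and then~\eqref{eq4.2} forces $P^{ch}_j(x)$ to be precisely the $n$-th partial Chebyshev sum of $f^{ch}_j(x)$. Thus, for $\overrightarrow{m}=0$ the solution is always unique up to a scalar factor, regardless of any property of $\bf \tilde{f}^t$, so this case places no condition on the system.

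Next, for the non-trivial range $\overrightarrow{m}\neq(0,\ldots,0)$, I would simply quantify Theorem~\ref{t6} over all such multi-indices. For sufficiency, assuming $\bf \tilde{f}^t$ is weakly perfect means, by Definition~\ref{de9}, that every $(n,\overrightarrow{m})\in\mathbb{Z}^{k+1}_+$ with $\overrightarrow{m}\neq 0$ is weakly normal for $\bf \tilde{f}^t$; Theorem~\ref{t6} then yields a unique (up to a scalar) solution of $\bf A^{ch}$ at each such multi-index, and together with the preceding paragraph this covers all multi-indices. For necessity, if $\bf A^{ch}$ has a unique solution for every $(n,\overrightarrow{m})$, then in particular for every multi-index with $\overrightarrow{m}\neq 0$; Theorem~\ref{t6} forces each such $(n,\overrightarrow{m})$ to be weakly normal for $\bf \tilde{f}^t$, which is precisely the defining condition for $\bf \tilde{f}^t$ to be weakly perfect.

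Since the proof is a direct quantification of Theorem~\ref{t6} together with a trivial edge-case check, there is no real obstacle; the only item to be careful about is to state explicitly that the $\overrightarrow{m}=0$ case is automatic and imposes no constraint, so that the biconditional matches Definition~\ref{de9} (which restricts to non-zero $\overrightarrow{m}$).
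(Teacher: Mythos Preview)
Your proposal is correct and matches the paper's approach: the paper presents this corollary as one of several ``obvious corollaries of Theorem~\ref{t6}'' without giving an explicit proof, and your argument is precisely the straightforward quantification of Theorem~\ref{t6} over all multi-indices (together with the trivial $\overrightarrow{m}=0$ case handled as in Remark~\ref{re3}) that the paper tacitly invokes.
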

 
 \begin{corollary}
 	\label{c6}
 	If the multi-index $(n,\overrightarrow{m})$ is weakly normal for the system of functions $\bf \tilde{f}^t$, then the linear Hermite--Chebyshev approximations
 	$$
 	\pi^{ch}_j(x;{\bf f^{ch}})=\frac{P^{ch}_j(x;{\bf f^{ch}})}{Q^{ch}_m(x;{\bf f^{ch}})},\,\,\,
 	j=1,2,\ldots,k\,
 	$$	
 	by relations~\eqref{eq4.2} are determined uniquely.
\end{corollary}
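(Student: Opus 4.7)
The plan is to read Corollary~\ref{c6} off directly from Theorem~\ref{t6}, since the substantive work has already been done there. I would begin by fixing the multi-index $(n,\overrightarrow{m})$ weakly normal for $\bf \tilde{f}^t$ and picking any two solutions $(\bar{Q}^{ch}_m,\bar{P}^{ch}_1,\ldots,\bar{P}^{ch}_k)$ and $(\bar{\bar{Q}}^{ch}_m,\bar{\bar{P}}^{ch}_1,\ldots,\bar{\bar{P}}^{ch}_k)$ of problem ${\bf A^{ch}}$ for the system ${\bf f^{ch}}$. By the necessary-and-sufficient part of Theorem~\ref{t6}, weak normality of $(n,\overrightarrow{m})$ for the associated trigonometric system $\bf \tilde{f}^t$ is exactly the condition under which problem ${\bf A^{ch}}$ admits a unique solution in the sense of Definition~\ref{de6}. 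Hence there exists a nonzero complex number $\lambda$ with
$$\bar{\bar{Q}}^{ch}_m(x)=\lambda\,\bar{Q}^{ch}_m(x),\qquad \bar{\bar{P}}^{ch}_j(x)=\lambda\,\bar{P}^{ch}_j(x),\quad j=1,2,\ldots,k.$$

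The remaining step is the elementary observation that rescaling numerator and denominator by a common nonzero constant leaves a rational fraction unchanged: for each $j=1,2,\ldots,k$,
$$\frac{\bar{\bar{P}}^{ch}_j(x)}{\bar{\bar{Q}}^{ch}_m(x)}=\frac{\lambda\,\bar{P}^{ch}_j(x)}{\lambda\,\bar{Q}^{ch}_m(x)}=\frac{\bar{P}^{ch}_j(x)}{\bar{Q}^{ch}_m(x)}.$$
Therefore the rational function $\pi^{ch}_j(x;{\bf f^{ch}})$ does not depend on the particular choice of admissible Hermite--Chebyshev polynomials, and the linear Hermite--Chebyshev approximations $\{\pi^{ch}_j(x;{\bf f^{ch}})\}_{j=1}^k$ are uniquely determined by relations~\eqref{eq4.2}. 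There is essentially no obstacle: the content of the corollary is already packaged inside Theorem~\ref{t6}; all that is needed is to translate the polynomial-level uniqueness (up to a numerical factor) into rational-function-level uniqueness, which is immediate. I would present the argument in this concise two-step form rather than repeating any of the determinantal machinery from the proof of Theorem~\ref{t3}.
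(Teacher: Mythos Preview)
Your proposal is correct and matches the paper's approach: the paper states Corollary~\ref{c6} as one of several ``obvious corollaries'' of Theorem~\ref{t6} without giving a separate proof, so deducing it directly from the polynomial-level uniqueness in Theorem~\ref{t6} is exactly what is intended.
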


 In addition to corollary \ref{c6}, we note that in work~\cite{Ibr} for $k=1$, sufficient conditions for the uniqueness of linear Hermite--Chebyshev approximations (for $k=1$ in~\cite{Ibr} they are called {\it linear Pad\'e--Chebyshev approximations}) were obtained as a result of describing the structure of the kernel of some {\it Toeplitz--plus--Hankel} matrices, the elements of which are the coefficients of the series $f^{ch}_1(x)$. In particular, in~\cite{Ibr} it is established that for the uniqueness of linear Pad\'e--Chebyshev approximations it is sufficient that the corresponding Toeplitz--plus--Hankel matrix has full rank.

 \subsection{Nonlinear Hermite--Chebyshev approximations}\label{s4.2} 
   Let us now define another construction of rational functions of consistent approximation of series~\eqref{eq4.1}.
  
  \begin{definition}\label{de12}
  	Rational functions of the form
  	$$
  	\widehat{\pi}^{ch}_j(x;{\bf f^{ch}})=	\widehat{\pi}^{ch}_{n_j,n,\overrightarrow{m}}(x;{\bf f^{ch}})=\frac{\widehat{P}^{ch}_{j}(x)}{\widehat{Q}^{ch}_m(x)},
  	$$
  	where the polynomials
  	$$\widehat{Q}^{ch}_m(x)=\widehat{Q}^{ch}_{n,\overrightarrow{m}}(x;{\bf f^{ch}})=\sum_{p=0}^{m}\widehat{u}_pT_p(x),
  	$$
  	$$\widehat{P}^{ch}_{j}(x)=\widehat{P}^{ch}_{n_j,n,\overrightarrow{m}}(x;{\bf f^{ch}})=\sum_{p=0}^{n_j}\widehat{v}^j_p\,T_p(x)\,\,\, (n_j=n+m-m_j)
  	$$ 
  	are chosen so that for $j=1,2,\ldots,k$
  	$$
  	f^{ch}_j(x)-\frac{\widehat{P}^{ch}_{j}(x)}{\widehat{Q}^{ch}_m(x)}=
  	\sum_{l=n+m+1}^{\infty} \widehat{a}^j_l\,T_l(x),
  	$$
  	will be called {\it nonlinear Hermite--Chebyshev approximations} for the multi-index $(n,\overrightarrow{m})$ and the system ${\bf f^{ch}}$.	
  \end{definition}

  In the case $k=1$, there exist functions that can be expanded in a series of Chebyshev polynomials for which nonlinear Hermite--Chebyshev approximations do not exist, and there exist functions for which nonlinear Hermite--Chebyshev approximations exist but do not coincide with linear Hermite--Chebyshev approximations (\cite{ Suet2,  Bek, SuetCh, Suet1,  SuetDis}).  
  
   In \cite{StarKechOsn} for $k\geqslant 1$, examples of systems of functions are constructed for which there are nonlinear Hermite--Chebyshev approximations that do not coincide with linear ones.
   Based on the results of the previous paragraph, we introduce into consideration new systems of functions that have similar properties. 
  
   Let us consider the system of functions ${\bf F_{k,\gamma}}=\{F_{\gamma}(x;\lambda_j)\}^k_{j=1}$, where
  $$
  F_{\gamma}(x;\lambda)=
  \sum_{l=0}^{\infty}\dfrac{\lambda^l}{(\gamma)_l}\,T_l(x)\,,
  $$
  and $\lambda\in \mathbb{R}$, $\gamma\in\mathbb{R}\setminus
  \mathbb{Z_{-}}$, $\{\lambda_j\}^k_{j=1}$ is a set of distinct non-zero real numbers. 
  
  The following theorem is proved similarly to theorem~\ref{t8} using the same reasoning that was used in the proof of theorem~\ref{t6}.
  
  \begin{theorem}\label{t10}
  	For a multi-index
  	$(n,\overrightarrow{m})\in \mathbb{Z}^{k+1}_+$, $\overrightarrow{m}\neq (0,\ldots,0)$, satisfying the condition $n\geqslant m_j$ $(j=1,2,\ldots,k)$, for all $n$, starting from some $n_0$, there exist nonlinear Hermite--Chebyshev approximations $\{{\widehat{\pi}}^{ch}_j(x;{\bf F_{k,\gamma}})\}_{j=1}^k$, and with the appropriate normalization the following representations of their denominator are valid:
  	$$
  	\widehat{Q}^{ch}_m(x;{\bf F_{k,\gamma}})=
  	{Q}_m(e^{i\arccos  x};{\bf E_{k,\gamma}})\,\cdotp\, \overline{{Q}_m(e^{i\arccos  x};{\bf E_{k,\gamma}}})=
  	$$
  	$$
  	=\frac{1}{\Gamma^2(n+m+\gamma)}\left |\int\limits_0^{+\infty} T_{\gamma}(t) e^{-t( x+i\sqrt{1-x^2})} dt\right |^2. 
  	$$  	
    If, in addition, $0<\lambda_1<\lambda_2<\ldots<\lambda_k$, then among $\{{\widehat{\pi}}^{ch}_j(x;{\bf F_{k,\gamma}})\}_{j=1}^k$ there is a nonlinear Hermite--Chebyshev approximation ${\widehat{\pi}}^{ch}_j(x;{\bf F_{k,\gamma}})$ that is not a linear Hermite--Chebyshev approximation $\pi^{ch}_j(x;{\bf F_{k,\gamma}})$.
  \end{theorem}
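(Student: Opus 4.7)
The plan is to reduce everything to the trigonometric case via the substitution $x = \cos t$ and then invoke Theorem~\ref{t8}, exactly as Theorem~\ref{t6} reduced the linear Hermite--Chebyshev problem to problem~${\bf A^t}$. Setting $\tilde{f}^t_j(t) := F_\gamma(\cos t;\lambda_j)$ and using $T_l(\cos t) = \cos lt$, one immediately sees that $\tilde{f}^t_j(t) = G_\gamma(t;\lambda_j)$, so the trigonometric system associated with ${\bf F_{k,\gamma}}$ via $x = \cos t$ is precisely ${\bf G_{k,\gamma}}$. By Theorem~\ref{t8}, for $n \geqslant m_j$ and all sufficiently large $n$, the trigonometric Hermite--Jacobi approximations $\widehat{\pi}^t_j(t;{\bf G_{k,\gamma}})$ exist, with common denominator $Q_m(e^{it};{\bf E_{k,\gamma}}) \cdot \overline{Q_m(e^{it};{\bf E_{k,\gamma}})}$. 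Moreover, the explicit formula~\eqref{eq3.23} from the proof of Theorem~\ref{t8} exhibits both $\widehat{Q}^t_m$ and each $\widehat{P}^t_j$ as \emph{even} trigonometric polynomials in $t$, i.e. real combinations of $1,\cos t,\cos 2t,\ldots$; this structural fact is what makes the passage to the Chebyshev basis possible.

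Next, I would set $\widehat{Q}^{ch}_m(x;{\bf F_{k,\gamma}}) := \widehat{Q}^t_m(\arccos x;{\bf G_{k,\gamma}})$ and analogously for each $\widehat{P}^{ch}_j$. The evenness just mentioned forces these to be genuine algebraic polynomials in $x$ of degrees at most $m$ and $n_j$ respectively, and their Chebyshev expansions are read off directly from the cosine expansions via $\cos pt = T_p(x)$. Substituting $t = \arccos x$ in the defining relation~\eqref{eq3.16} for ${\bf G_{k,\gamma}}$ and using $\cos l t = T_l(x)$ converts it into exactly the defining condition of Definition~\ref{de12}, so these polynomials realize the required nonlinear Hermite--Chebyshev approximations. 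The closed integral form for $\widehat{Q}^{ch}_m$ is then obtained by substituting $e^{it} = x + i\sqrt{1-x^2}$ (valid on $[-1,1]$) into the integral representation~\eqref{eq5.2} for $Q_m(z;{\bf E_{k,\gamma}})$, followed by analytic continuation beyond $[-1,1]$ as in Remark~\ref{r6}.

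For the non-coincidence statement, assume $0 < \lambda_1 < \cdots < \lambda_k$. The second part of Theorem~\ref{t8} supplies an index $j_0$ for which $\widehat{\pi}^t_{j_0}(t;{\bf G_{k,\gamma}}) \neq \pi^t_{j_0}(t;{\bf G_{k,\gamma}})$. Theorem~\ref{t6} identifies the linear Hermite--Chebyshev approximation $\pi^{ch}_{j_0}(x;{\bf F_{k,\gamma}})$ with $\pi^t_{j_0}(\arccos x;{\bf G_{k,\gamma}})$; combined with the correspondence for the nonlinear side established in the previous paragraph, this transfers the inequality to the Chebyshev setting, giving $\widehat{\pi}^{ch}_{j_0}(x;{\bf F_{k,\gamma}}) \neq \pi^{ch}_{j_0}(x;{\bf F_{k,\gamma}})$.

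The main obstacle I foresee is not the algebraic translation itself but the verification that $\widehat{Q}^t_m$ and the $\widehat{P}^t_j$ for ${\bf G_{k,\gamma}}$ can be taken to be even functions of $t$: Definition~\ref{de10} by itself does not force the sine Fourier coefficients to vanish, and Hermite--Jacobi approximations need not be unique. The cleanest resolution is to rely directly on the constructive formulas~\eqref{eq3.22}--\eqref{eq3.23} from the proof of Theorem~\ref{t8}, where the approximant is produced as the real part of $\pi^j_{n_j,n,\overrightarrow{m}}(e^{it};{\bf E_{k,\gamma}})$ and is therefore even by construction; the freedom in the normalizing factor (see Definition~\ref{de6}) is absorbed into the statement ``with the appropriate normalization''.
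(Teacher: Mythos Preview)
Your proposal is correct and follows exactly the route the paper indicates: the paper's ``proof'' of Theorem~\ref{t10} is the single sentence that it is proved similarly to Theorem~\ref{t8} using the reasoning of Theorem~\ref{t6}, and you have spelled out precisely that reduction (the substitution $x=\cos t$ identifying ${\bf F_{k,\gamma}}$ with ${\bf G_{k,\gamma}}$, the evenness coming from~\eqref{eq3.23}, and the transfer of the non-coincidence via~\eqref{eq3.24}). One small caveat: in the last paragraph you invoke Theorem~\ref{t6} to identify $\pi^{ch}_{j_0}$ with $\pi^t_{j_0}(\arccos x)$, but Theorem~\ref{t6} as stated is a \emph{uniqueness} criterion requiring weak normality, which is not verified here; what you actually need (and what the proof of Theorem~\ref{t6} provides) is the elementary observation that any solution of problem~${\bf A^{ch}}$ pulls back under $x\mapsto\cos t$ to a solution of problem~${\bf A^t}$, so that if $\widehat\pi^{ch}_{j_0}$ coincided with some linear Hermite--Chebyshev approximation then $\widehat\pi^t_{j_0}$ would be a trigonometric Hermite--Pad\'e approximation, contradicting Theorem~\ref{t8}.
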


\begin{remark}\label{r6}
In work~\cite{Aptek81M-L} integral representations were obtained only for the denominators ${Q}_m(z;{\bf E_{k,\gamma}})$ and residual functions of the Hermite--Pad\'e approximations of the Mittag-Leffler functions. Therefore, in theorems~\ref{t8} and \ref{t10}, formulas for the numerators of the nonlinear Hermite--Fourier and Hermite--Chebyshev approximations are missing. At the same time, the numerators $\widehat{P}^t_{j}(x;{\bf G_{k,\gamma}})$, $\widehat{P}^{ch}_{j}(x;{\bf F_{k,\gamma}})$ can also be written out in explicit form if we use equality~\eqref{eq3.31} and the representations of the polynomials $P^j_{n_j}(z;{\bf E_{k,\gamma}})$ obtained in~\cite{StarRjab2} in the form of determinants, the elements of which are the Taylor coefficients of the functions of the system ${\bf E_{k,\gamma}}$: if the conditions of theorems 4 and 6 are met, then for $j=1,2,\ldots,k$
$$
\widehat{P}^t_{j}(x;{\bf G_{k,\gamma}})=Re\left\{P^j_{n_j}(e^{ix};{\bf E_{k,\gamma}})\,\cdotp\,\overline{{Q}_m(e^{ix};{\bf E_{k,\gamma}})}\right\},
$$
$$
\widehat{P}^{ch}_{j}(x;{\bf F_{k,\gamma}})= Re\left\{P^j_{n_j}(e^{i\arccos  x};{\bf E_{k,\gamma}})\,\cdotp\,\overline{{Q}_m(e^{i\arccos  x};{\bf E_{k,\gamma}})}\right\}.
$$
\end{remark}

\vskip0.3truecm

\fullauthor{Aleksandr Pavlovich Starovoitov}
\address {F. Skorina Gomel State University,\\
104 Sovetskaya str.,
Gomel, 246019 Republic of Belarus,}
\email{apsvoitov@gmail.com, svoitov@gsu.by}

\fullauthor{Igor Viktorovich  Kruglikov}
\address {F. Skorina Gomel State University,\\
	104 Sovetskaya str.,
	Gomel, 246019 Republic of Belarus,}
\email{igor.v.kruglikov@gmail.com}

\fullauthor{Tatyana Mikchyalovna Osnach}
\address {F. Skorina Gomel State University,\\
	104 Sovetskaya str.,
	Gomel, 246019 Republic of Belarus,}
\email{osnach@gsu.by}
\label{lastpage}

\end{document}